\newcommand{\class}{{\mathscr{C}}}
\renewcommand{\to}{\rightarrow}
\newcommand{\imod}[1]{\allowbreak\mkern4mu({\operator@font mod}\,\,#1)}
\newtheorem{thm}{Theorem}[section] 
\newtheorem*{conj*}{Conjecture}
\newtheorem{lemma}[thm]{Lemma}
\newtheorem{corol}[thm]{Corollary}
\newtheorem{prop}[thm]{Proposition}
\theoremstyle{definition}
\begin{document}

 \author{Rachel Pengelly}
 \address{Alan Turing Building, University of Manchester, Manchester, M13 9PL, UK}
 \email{rachel.pengelly@manchester.ac.uk}

\author{Donna M. Testerman}
\address{ Ecole Polytechnique F\'ed\'eral de Lausanne,
Institute of Mathematics, Station 8, CH-1015 Lausanne, Switzerland}
\email{donna.testerman@epfl.ch}
 
\thanks{R. Pengelly gratefully acknowledges the finanical support of the EPSRC grant EP/R513167/1, the LMS ECR Travel grant with reference ECR-2122-25, and EPFL for their hospitality.
D. Testerman acknowledges the support of the Swiss National Science Foundation via grant number $200020\textunderscore207730$.}

\title[Unicity of $A_1$-subgroups]{Unicity of $A_1$-subgroups associated to unipotent elements in simple algebraic groups}

\begin{abstract}

Let $k$ be an algebraically closed field of positive characteristic, and $G$ a simple algebraic group over $k$. 
Under the assumption that the characteristic is a good prime for $G$, we determine a maximal $G$-stable subvariety $\mathcal{U}'$ of the variety of the unipotent elements in $G$ such that for all $u \in \mathcal{U}'$ any two $A_1$-subgroups of $G$ containing $u$ are $G$-conjugate. 
 This result establishes to what degree an analogue of the Jacobson--Morozov Theorem for Lie algebras is valid for simple algebraic groups defined over fields of (good) positive characteristic. 
\end{abstract}

\date{\today}

\maketitle

\section{Introduction}\label{sec:intro}

Let $k$ be an algebraically closed field, and $G$ a simple algebraic group defined over $k$.
The aim in this paper is to  determine a maximal $G$-stable subvariety ${\mathcal U}'$ of the variety of unipotent elements in $G$ such that, for all
$u\in{\mathcal U}'$, any two
${\rm (P)SL}_2(k)$-subgroups of $G$ containing $u$ are $G$-conjugate. 
Hereafter we shall refer to a closed connected simple subgroup of $G$ of type $A_1$ as an
\emph{$A_1$-subgroup} of $G$ and to the $G$-conjugacy class of such subgroups as simply the \emph{class of $A_1$-subgroups}.

The above question  was motivated by recent progress on the analogous question concerning nilpotent orbits in Lie algebras. Set $\mathfrak{g} = \text{Lie}(G)$.
Assuming ${\rm char}(k)\ne 2$, an $\mathfrak{sl}_2$-triple in $\mathfrak{g}$ is a triple of elements of $\mathfrak g$ which generate a Lie subalgebra
isomorphic to $\mathfrak{sl}_2(k)$. In characteristic 0 the theorems of Morozov, Jacobson and Kostant (see \cite{Morozov, Jacobson, Kostant}) show that
any non-zero nilpotent
$e \in \mathfrak g$ can be embedded in an $\mathfrak{sl}_2$-triple $(e,h,f)$ in $\mathfrak{g}$, which is unique up to
the action of the centralizer of $e$ in $G$. A result of Pommerening, \cite{PomII}, extends this embedding to the odd positive good characteristic case. By \cite[Theorem 1.1]{StewTho}, this embedding is unique if and only if ${\rm char}(k) > h(G)$ the Coxeter number of $G$.  
For ${\rm char}(k) \leq h(G)$, this embedding fails to be unique. However, results of Goodwin--Pengelly recover uniqueness for a certain explicitly given maximal subvariety. 
This result can be found for classical
Lie algebras in \cite[Theorem 1.1]{GP}, with the corresponding result for exceptional Lie algebras in preparation. 

We return to our consideration of unipotent elements in $G$. If ${\rm char}(k) = 0$, for any non-identity unipotent element $u\in G$, any two $A_1$-subgroups containing $u$ are $G$-conjugate (this follows from the result for $\mathfrak{g}$). Hence we will
assume hereafter that $k$ is of characteristic $p>0$, and furthermore will assume that $p$ is a good prime for $G$. (See the end of the introduction for some remarks about this additional assumption.)
Note that even for $p$
arbitrarily large, there exist non-identity unipotent elements $u\in G$ lying in infinitely many non-conjugate $A_1$-subgroups of $G$; one can see this by
considering the representation theory of $A_1$, e.g. when $G$ is a classical type group. Nevertheless, for certain classes of unipotent elements of $G$, or for $A_1$-subgroups satisfying additional criteria, uniqueness results are available.
For example, the main result of \cite{LS_root} implies the unicity (up to $G$-conjugacy) of $A_1$-subgroups containing a long root element. In  \cite{SeitzgoodA1},
Seitz considers so-called ``good'' $A_1$-subgroups, that is $A_1$-subgroups $X \subseteq G$ such that the weights of the maximal torus of $X$ on
$\mathfrak{g}$ are less than or equal to $2p-2$. Seitz establishes the existence and unicity (up to $G$-conjugacy) of a good $A_1$-subgroup containing $u$, for any unipotent $u\in G$ of order $p$.

We now state our result for the classical type groups.

\begin{thm} \label{t:mainclassical}
Let $G$ be a simply connected simple algebraic group of classical type defined over an algebraically closed field $k$ of good characteristic $p>0$.  Let $u \in G$ be unipotent of order $p$. Then  any two $A_1$-subgroups containing $u$ are $G$-conjugate if and only if the Jordan block decomposition of $u$ acting on the natural module of $G$ is as given in Table \ref{Table1.1}.

\begin{table}[ht!]
\centering
\begin{tabular}{| c | c  c |}  
 \hline
  $G$ & \multicolumn{2}{|c|}{ Jordan block decomposition of $u$}  \\ 
 \hline\hline
 $A_n$ & $(J_\ell+J_1^r)$ & $\ell \leq p$ and if $\ell \in \{3,p\}$ then $r=0$ \\
 $C_n$ &  $(J_{a}+J_1^r)$ & $a$ even, $a<p$ \\
  & $(J_a^2+J_1^r)$ &  $a$ odd, $a<p$ \\
 $B_n$ or $D_n$ & $(J_a+J_1^r)$ & $a$ odd, $a\leq p$, and
  if $a = 3$ then $r = 0$ \\
 & $(J_a^2+J_1^r)$ & $a$ even, $a<p$ \\
 \hline
\end{tabular}
\label{Table1.1}
\end{table}   
\end{thm}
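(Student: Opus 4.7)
The plan is to reduce the statement to a classification of rational $SL_2$- or $PSL_2$-module structures on the natural module $V$ of $G$. An $A_1$-subgroup $X \leq G$ containing $u$ endows $V$ with the structure of an $X$-module, which in types $B, C, D$ must preserve the $G$-invariant bilinear or quadratic form. Two such subgroups $X_1, X_2$ containing $u$ are $G$-conjugate if and only if the corresponding $X$-module structures on $V$ are isomorphic via an isomorphism respecting the action of $u$ (and the form, in the orthogonal and symplectic cases). Thus the theorem reduces to asking: for each admissible Jordan type of $u$, is the (form-compatible) $X$-module structure on $V$ inducing that Jordan type on $u$ unique up to isomorphism?

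By Seitz's theorem on good $A_1$-subgroups, there exists a good $A_1$-subgroup $X_0$ containing $u$, unique up to $C_G(u)^\circ$-conjugacy; it realises the semisimple structure $V \cong \bigoplus_i L(m_i)$, where the $m_i + 1$ are the Jordan block sizes of $u$. Since $u$ has order $p$, every $m_i \leq p-1$, so each $L(m_i)$ is simultaneously simple, Weyl and tilting. The heart of the theorem is to determine precisely for which Jordan types this semisimple structure is the only one, thereby yielding the list in Table \ref{Table1.1}.

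I would proceed type by type, using the representation theory of $SL_2$ in characteristic $p$: the simples $L(m) = V(m)$ for $m \leq p-1$, the Weyl modules $V(m)$ and indecomposable tilting modules $T(m)$ for $m \geq p$, Frobenius twists, and their extensions. For each Jordan type excluded from the table, I exhibit an alternative $X$-module structure. The decisive example is type $A$ with $\ell = p$ and $r \geq 1$: the Weyl module $V(p)$ is indecomposable of dimension $p+1$ and restricts to a regular unipotent of $SL_2$ as $J_p + J_1$, providing a non-semisimple structure inequivalent to $L(p-1) \oplus L(0)$. A similar Frobenius-twist construction yields a $4$-dimensional indecomposable $SL_2$-module with Jordan type $J_3 + J_1$ when $p = 3$, accounting for the exclusion of $\ell = 3$, $r \geq 1$. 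For each listed good Jordan type, I verify uniqueness by showing that any $X$-module structure on $V$ with the prescribed Jordan content is forced to decompose as $L(\ell - 1) \oplus L(0)^r$ (up to the action of the form in the non-type-$A$ cases).

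The main obstacle is expected to be the case analysis for types $B, C, D$, where the requirement that the $X$-module structure preserve a nondegenerate symmetric or alternating form significantly restricts the available indecomposable modules. One must identify which indecomposable $SL_2$-modules carry an invariant form of the correct parity, verify that the candidate non-semisimple structures actually embed in the appropriate classical group, and conversely rule out alternatives in the cases appearing in the table. A secondary subtlety is the $SL_2$ versus $PSL_2$ distinction, governed by the parities of the highest weights appearing in $V$ and its interaction with the form; this is responsible for the minor differences between the exclusion patterns for types $A$, $C$ and $B/D$.
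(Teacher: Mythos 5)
Your overall framing---reduce to classifying form-compatible $\mathrm{SL}_2(k)$-module structures on the natural module and decide, Jordan type by Jordan type, whether the structure is unique---is the same strategy the paper follows, and your Weyl-module competitor $W(p)$ (with blocks $J_p+J_1$) for the exclusion $\ell=p$, $r\geq 1$ in type $A$ is exactly the paper's example. However, there are three genuine gaps. First, the bulk of the ``only if'' direction is missing from your sketch: almost all excluded Jordan types are those with two or more non-trivial blocks not forced by the form (e.g.\ $(J_2^2)$ in type $A$, or $(J_a+J_b+\cdots)$ with $a\neq b$), and these are \emph{not} ruled out by a semisimple-versus-non-semisimple comparison. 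The paper disposes of them uniformly in Proposition~\ref{nonsimple}: writing $u=u_1u_2$ inside a commuting product $Y_1Y_2$ with $u\notin Y_i$, the diagonal subgroups $x\mapsto \varphi_1(x)\cdot F^j(\varphi_2(x))$, $j=0,1$, are non-conjugate $A_1$-subgroups through $u$. In your module language this amounts to comparing, say, $L(1)\oplus L(1)$ with $L(1)\oplus L(1)^F$; your proposal never addresses these types, and your conjugacy criterion must in any case be stated up to precomposition with Frobenius twists for such comparisons to make sense. Second, your $\ell=3$ exclusion is mis-attributed: the competitor is the \emph{irreducible} $4$-dimensional module $L(1)\otimes L(1)^F$, which has Jordan type $J_3+J_1$ for every $p>3$ (at $p=3$ one is already in the case $\ell=p$). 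Since this module is semisimple, the question ``is the semisimple structure the only one'' is the wrong dichotomy; the same competitor is what forces the exclusion $a=3$, $r>0$ in types $B_n/D_n$, where the paper compares $(L(1)\otimes L(1)^F)\oplus N$ with $L(2)\oplus N'$ inside ${\rm SO}(W)$.

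Third, your uniqueness step (``any $X$-module structure with the prescribed Jordan content is forced to decompose as $L(\ell-1)\oplus L(0)^r$'') is asserted without the two inputs that make it provable. One needs (i) that an $A_1$-subgroup containing $u$ acts completely reducibly on $V$ when the non-trivial block sizes are $<p$ (\cite[Corollary 5.8]{K2}); without this you cannot exclude non-semisimple structures even for the listed types. And one needs (ii) Steinberg's tensor product theorem combined with explicit Jordan-block computations for tensor products $J(m)\otimes J(n)$ (the paper's Lemmas~\ref{twofold_tensors} and~\ref{threefold_tensors}, resting on Renaud's formulas) to rule out twisted-tensor irreducibles whose Jordan type could match $(J_\ell+J_1^r)$ or $(J_a^2+J_1^r)$; the exceptional isomorphisms $J(2)\otimes J(2)\cong J(3)\oplus J(1)$ and $J(2)\otimes J(p)\cong J(p)^2$ are precisely what generate the exclusions $\ell=3$ and $(J_p^2+J_1^r)$ in type $C$. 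Moreover, (i) genuinely fails to apply in a case that \emph{is} in the table: $(J_p+J_1^r)$, $r>1$, in $B_n/D_n$. Uniqueness still holds there, but by a different argument: a reducible indecomposable summand would carry a unique non-trivial Jordan block, hence by \cite[Proposition 5.7]{K2} is not self-dual, so its dual would also have to occur, contradicting the Jordan structure---forcing complete reducibility after all. This is exactly the form-theoretic subtlety you flagged as your ``main obstacle,'' but your plan contains no mechanism to resolve it, nor does it explain why $a=p$ is admissible in $B_n/D_n$ while $\ell=p$, $r>0$ fails in type $A$.
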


Note that the property of any two $A_1$-subgroups being $G$-conjugate is preserved under isogeny, hence it is enough to prove Theorem~\ref{t:mainclassical} for simply-connected classical type simple algebraic groups.

We contrast this result with the results in $\mathfrak{g}$, \cite[Theorem 1.1]{GP}.  There the maximal $G$-stable closed subvariety $\mathcal{V}\subseteq \mathfrak{g}$ consisting of nilpotent elements
such that the $G$-orbits of $\mathcal{V}$ are in bijection with the $G$-orbits of subalgebras of $\mathfrak{g}$ isomorphic
to $\mathfrak{sl}_2(k)$ and whose nilpotent elements lie in $\mathcal{V}$ is also described by the Jordan block decomposition of the action of the nilpotent elements on the natural module for $G$.
The variety $\mathcal{V}$ consists of those elements with Jordan block decomposition $(J_{a_1}+\dots + J_{a_k})$ such that either:
\begin{enumerate}[(a)]
    \item $G$ of type $A$ or $C$:  $a_i < p$ for $1 \leq i \leq k$; or 
    \item $G$  of type $B$ or $D$: $a_1 \leq p, a_i <p$ for $ 2 \leq i \leq k $.
\end{enumerate}
Hence we see that the conditions required for the unipotent elements to lie in a unique  class of $A_1$-subgroups are far more restrictive than  the corresponding conditions for
nilpotent elements in $\mathfrak{g}$.

For $G$ an exceptional type group, we will identify the class $\class(u)$ of a non-identity unipotent element $u\in G$  via its Bala--Carter--Pommerening label. An explanation of this notation can be found in \S\ref{sec:exc}. 

\begin{thm} \label{t:mainexceptional}
  Let $G$ be an exceptional type simple algebraic group defined over an algebraically closed field $k$ of good characteristic $p>0$.
  Let $u \in G$ be unipotent of order $p$. Then  any two $A_1$-subgroups containing $u$ are $G$-conjugate if and only if one of the following holds:\begin{enumerate}
  \item the $G$-conjugacy class of $u$ is $A_1,A_3$ or $D_4$;
  \item $u$ is a regular unipotent element in $G$; or
    \item $(G,p,\class(u))$
      is as in Table $\ref{Tablemain}$.
      \end{enumerate}

    \begin{table}[ht!]
\centering
\begin{tabular}{|  c | c | c |}  
 \hline
  $G$ & $p$ & $\class(u)$ \\ 
 \hline\hline 
 \rule{0pt}{3ex}
 $G_2$ & $\geq 5$ & $\tilde{A_1}$  \\
 $F_4$ & $\geq 5$ & $\tilde{A_2}, B_2, B_3, C_3, F_4(a_1)$  \\
 $E_6$ & $\geq 7$ & $A_5, D_5, E_6(a_1)$  \\
 $E_7$ & $7$ & $(A_5)'', (A_5)'$  \\
  & $\geq 11$ & $(A_5)'', (A_5)', D_5,A_6, D_6, E_6(a_1), E_6, E_7(a_1)$ \\
 $E_8$ & $7$ & $A_5$  \\
  & $\geq 11$ & $A_5, D_5, E_6(a_1), D_6, E_6, A_7, D_7, E_7(a_1), E_7, E_8(a_4), E_8(a_2), E_8(a_1)$  \\
 \hline
\end{tabular}
\label{Tablemain}
\end{table} 
\end{thm}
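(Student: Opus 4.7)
The plan is to reduce Theorem~\ref{t:mainexceptional} to a case-by-case verification using the classification of $A_1$-subgroup conjugacy classes in exceptional simple algebraic groups of good characteristic. The key observation is that every $A_1$-subgroup $X \leqs G$ determines a labelled Dynkin diagram via the action of a cocharacter of $X$ on the simple roots of $G$, and in good characteristic this diagram coincides with the Bala--Carter--Pommerening label of the $G$-conjugacy class of the non-identity unipotent elements of $X$. Consequently, the unicity of $A_1$-subgroups containing a fixed unipotent $u \in G$ of order $p$ amounts to counting $G$-conjugacy classes of $A_1$-subgroups of $G$ whose labelled diagram matches $\class(u)$: unicity holds precisely when this count is one.

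The existence of at least one such $A_1$-subgroup is ensured by Seitz's theorem in \cite{SeitzgoodA1}, which produces a canonical \emph{good} $A_1$-subgroup $X_0$ containing $u$, unique up to $C_G(u)$-conjugacy, with all weights of a maximal torus of $X_0$ on $\mathfrak{g}$ bounded by $2p-2$. Thus the failure of unicity corresponds exactly to the existence of a \emph{non-good} $A_1$-subgroup $X \ne X_0$ containing $u$, with some $\mathfrak{g}$-weight strictly exceeding $2p-2$. The proof then splits into two complementary directions, one for each implication in the biconditional.

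For the unicity direction (conditions (1)--(3) imply unicity), the case $u$ regular follows from the uniqueness of the Borel subgroup containing $u$, and the case $u \in \class(A_1)$ follows from the main theorem of \cite{LS_root} on $A_1$-subgroups generated by long-root elements. For $\class(u) \in \{A_3, D_4\}$ and for each triple $(G,p,\class(u))$ in Table~\ref{Tablemain}, the verification proceeds by direct inspection: using the tabulated lists of $A_1$-subgroup classes (both good and non-good) in the exceptional groups, one checks that $X_0$ is the only $A_1$-subgroup, up to $G$-conjugacy, whose labelled diagram matches $\class(u)$. For the converse direction, for each unipotent class $\class(u)$ of order $p$ not appearing in (1)--(3), one exhibits a second, non-$G$-conjugate $A_1$-subgroup containing a chosen representative $u$. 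Such bad $A_1$-subgroups typically arise inside a proper subsystem or Levi subgroup $H \leqs G$ containing $u$ as a distinguished unipotent element: the representation theory of $A_1$ in characteristic $p$ produces an $A_1$-subgroup of $H$ whose regular unipotent element is $G$-conjugate to $u$ but whose weights on $\mathfrak{g}$ violate the bound $2p-2$, distinguishing it from $X_0$.

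The principal obstacle is the combinatorial volume of case-checking required for $G$ of type $E_7$ and $E_8$ at the small good primes $p = 7, 11$, where many unipotent classes satisfy the order-$p$ hypothesis and many $A_1$-subgroup classes exist whose labelled diagrams can \emph{a priori} match classes appearing in Table~\ref{Tablemain}. Ruling out extraneous $A_1$-subgroups for the listed classes and constructing witnessing bad $A_1$-subgroups for the unlisted ones both require a systematic examination of embeddings of $A_1$-subgroups into Levi and subsystem subgroups of $G$ together with an analysis of their weight decompositions on $\mathfrak{g}$. The most delicate step will be handling the borderline cases where two $A_1$-classes have matching labelled diagrams but differ only in subtle restriction data, requiring character-theoretic comparison to confirm or rule out $G$-conjugacy.
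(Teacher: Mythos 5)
Your proposal rests on a claim that is false, and it is false in exactly the regime where the theorem has content. You assert that in good characteristic the labelled diagram of an arbitrary $A_1$-subgroup $X$ coincides with the Bala--Carter--Pommerening label of the class of its non-identity unipotent elements, and you then reduce unicity to counting $A_1$-classes ``whose labelled diagram matches $\class(u)$''. That coincidence holds only for good (restricted) $A_1$-subgroups; for non-good ones it fails, and non-good $A_1$-subgroups are precisely the source of non-unicity. Concretely, the image of ${\rm SL}_2(k)$ acting as $L(1)\otimes L(1)^{F^a}$ on a $4$-dimensional space sits in an $A_3$-Levi subgroup, its non-identity unipotent elements have Jordan blocks $(J_3+J_1)$ (the $A_2$-class), yet its torus weights on the natural module are $\pm(p^a+1),\pm(p^a-1)$, so its labelled diagram is not the diagram $(2,0,2)$ of the $A_2$-class; varying $a$ gives infinitely many pairwise non-conjugate $A_1$-overgroups of a single element. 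This is exactly the mechanism used in Proposition \ref{reduction} to eliminate $\class(u)=A_2$ for $(E_6,5)$. Under your criterion the diagram-matching count for such a class is one (the good $A_1$ alone), so your criterion would wrongly predict unicity. Your reformulation via \cite{SeitzgoodA1} --- unicity fails if and only if some non-good $A_1$ contains $u$ --- is correct, but it only relocates the problem: one must classify, or at least control, all non-good $A_1$-overgroups, and the diagram-matching shortcut cannot do this.

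The second gap is that the ``tabulated lists of $A_1$-subgroup classes (both good and non-good)'' you propose to inspect exist only for $p>N(A_1,G)$: that is the content of \cite{LaTeA1} and it yields the paper's Proposition \ref{p:largeprimes}, in agreement with your large-$p$ step. For the remaining pairs $(E_6,5)$, $(E_7,5)$, $(E_7,7)$, $(E_8,7)$ no such unified list exists, and the paper must assemble one via the trichotomy non-$G$-cr / $G$-irreducible / $G$-cr-and-$G$-reducible: non-$G$-cr $A_1$'s are handled through \cite{LiTh_CR} together with Jordan-block data for tilting and Weyl modules and \cite{Law_jordan} (Proposition \ref{p:nonGcr}); $G$-irreducible $A_1$'s through \cite{Thomas_A1} and the fusion tables of \cite{Law_unimax} (Proposition \ref{prop:G-irr}); and $G$-cr, $G$-reducible $A_1$'s by reduction to $L$-irreducible subgroups of Levi subgroups (Proposition \ref{p:Greducible}). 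Your witness-construction recipe also misses the one delicate failure at small $p$: for $\class(u)=A_6$ in $E_7$, $p=7$, the element is distinguished in a simple Levi factor, so no commuting-product or subsystem construction applies, and the two non-conjugate $A_1$-overgroups are non-$G$-cr subgroups lying in parabolics but in no Levi (Proposition \ref{p:nonGcr}). Finally, the regular case does not ``follow from the uniqueness of the Borel subgroup containing $u$'': an $A_1$-overgroup of $u$ is not contained in any Borel subgroup, and the paper instead invokes \cite[Proposition 2.11]{BT_reg}.
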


Without describing in detail the result for nilpotent orbits, we mention that here again the list of $G$-orbits of nilpotent elements for which there exists a
unique $G$-orbit of associated subalgebras isomorphic to ${\mathfrak{sl}}_2(k)$ is much longer than the above list of unipotent classes in the exceptional groups.

The results of \cite{SeitzgoodA1} allow us to give more detail on the properties of the $A_1$-subgroups containing $u$ of the form given in Theorems \ref{t:mainclassical} and \ref{t:mainexceptional}. 

\begin{corol}
Let $G$ be a simple algebraic group defined over an algebraically closed field $k$ of good characteristic $p>0$. Let $u \in G$ be unipotent of order $p$ such that any two $A_1$-subgroups containing $u$ are $G$-conjugate. 
Then any $A_1$-subgroup of $G$ containing $u$ is a good $A_1$-subgroup, and moreover any two such $A_1$-subgroups are conjugate by an element of $R_u(C_G(u))$.
\end{corol}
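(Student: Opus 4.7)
The plan is to combine the unicity hypothesis with Seitz's existence and uniqueness theorem for good $A_1$-subgroups recalled in the introduction. By \cite{SeitzgoodA1}, for every unipotent $u\in G$ of order $p$ there is at least one good $A_1$-subgroup $X$ containing $u$, and in the sharp form available in the literature any two good $A_1$-subgroups containing $u$ are conjugate by an element of $R_u(C_G(u))$. Both ingredients are already on hand, so the argument is essentially a short bootstrap.

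For the first assertion, let $Y$ be an arbitrary $A_1$-subgroup of $G$ containing $u$, and let $X$ be the good $A_1$-overgroup of $u$ provided by Seitz. The hypothesis of the corollary gives some $g\in G$ with $Y = gXg^{-1}$. Goodness is defined by a weight bound on $\mathfrak{g}$ for a maximal torus of the $A_1$-subgroup, hence is manifestly preserved under $G$-conjugation, and so $Y$ is itself good. Therefore every $A_1$-subgroup of $G$ containing $u$ is a good $A_1$-subgroup.

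With the first assertion in hand, the second is immediate: any two $A_1$-subgroups of $G$ containing $u$ are now both good $A_1$-overgroups of $u$, and Seitz's refined uniqueness places them in a single $R_u(C_G(u))$-orbit. I do not foresee a substantive obstacle here; the argument is a purely formal transfer from good to arbitrary $A_1$-subgroups using the unicity hypothesis, and the only care required is to invoke the sharp form of Seitz's uniqueness (up to $R_u(C_G(u))$) rather than the weaker forms stated only up to $C_G(u)^{\circ}$ or up to all of $G$.
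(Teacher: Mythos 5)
Your proposal is correct and is essentially the paper's own argument: the paper proves the corollary in one line by citing Seitz's Theorem 1.1 from \cite{SeitzgoodA1} (existence, for any order-$p$ unipotent $u$ in good characteristic, of a good $A_1$-subgroup containing $u$, unique up to conjugacy by $R_u(C_G(u))$) together with Theorems \ref{t:mainclassical} and \ref{t:mainexceptional}, and your bootstrap---conjugation preserves the weight bound defining goodness, so the unicity hypothesis forces every $A_1$-overgroup of $u$ to be good, whence Seitz's sharp $R_u(C_G(u))$-uniqueness applies---is precisely the intended unpacking of that citation. Your care in invoking the $R_u(C_G(u))$-form of Seitz's uniqueness is exactly right, and your argument moreover shows the corollary is purely formal given its hypothesis, Theorems \ref{t:mainclassical} and \ref{t:mainexceptional} being needed only to identify which $u$ satisfy that hypothesis.
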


\begin{proof}
    This follows by Theorems \ref{t:mainclassical} and \ref{t:mainexceptional} in combination with \cite[Theorem 1.1]{SeitzgoodA1}.
\end{proof}

The main reference, until recent years, on classes of $A_1$-subgroups of the exceptional algebraic groups was \cite{LaTeA1}, where under certain restrictions on
$p$, the authors determine all  classes of $A_1$-subgroups and group them according to the conjugacy class of their non-identity unipotent elements.
Thus, Theorem~\ref{t:mainexceptional} follows directly from {\it loc.cit.} for large enough $p$. The proof of Theorem~\ref{t:mainexceptional}
for smaller primes relies on the results in \cite{Law_unimax}, where the author determines the $G$-fusion in an exceptional group $G$ of unipotent classes
represented in  maximal closed connected reductive subgroups of $G$, and on the more recent work of Thomas and Litterick--Thomas (see \cite{Thomas_A1, LiTh_CR}) providing
detailed information about simple subgroups of the  exceptional algebraic groups in small characteristics.

Finally, let us say something about the situation when $p$ is a bad prime for $G$. First we note that for the group $G_2$
defined over a field of characteristic $3$, there is a class of elements of order $3$ which does not intersect any  $A_1$-subgroup
of $G$ (see \cite{PST}). Nevertheless, one could still ask for which classes of elements of order $p$ a representative of the class meets at most
one class of $A_1$-subgroups. However, the absence of a classification of the non-$G$-completely reducible $A_1$-subgroups of the exceptional algebraic groups in bad characteristic (see \S\ref{sec:exc} for the relevant definitions) increases significantly the complexity of the analysis and we do not treat this case here.

{\it {Acknowledgements:}} The authors would like to thank David Craven for pointing out an error  in an earlier version of Theorem~\ref{t:mainclassical} and suggesting a shorter argument for parts of the proof. In addition, we thank Gunter Malle, Adam Thomas and Martin Liebeck for their useful comments, which we have incorporated in this version.

\section{Notation and Preliminaries} \label{prelims}

We assume throughout this paper that $k$ is an algebraically closed field of characteristic $p>0$. Let $G$ be a simple algebraic group over $k$ with $p$ a good prime for $G$. Let $F:G\to G$ be the standard Frobenius endomorphism of $G$. All representations
of algebraic groups are assumed to be finite-dimensional rational representations over $k$.

Let $L(c)$ denote the unique (up to isomorphism) irreducible $k{\rm SL}_2(k)$-module with highest weight $c \in \mathbb{Z}_{\geq 0}$.
Recall that for $0 \leq c \leq p-1 $ the non-identity unipotent elements
in ${\rm SL}_2(k)$ act with a single Jordan block of size $c+1$ on $L(c)$.
Also recall that ${\rm SL}_2(k)$ preserves a non-degenerate symplectic form on $L(c)$ if $c$ is odd, and a non-degenerate quadratic form if $c$ is even. 

Let $W(c)$ denote the $(c+1)$-dimensional Weyl module of highest weight $c \in \mathbb{Z}_{\geq 0}$ for ${\rm SL}_2(k)$. For $p \leq c \leq 2p-2 $, the non-identity unipotent elements in
${\rm SL}_2(k)$ act with Jordan blocks of sizes $p$ and $c-p+1$ on $W(c)$, see \cite[Proposition 5.7]{K2}.
Let $T(c)$ denote the indecomposable tilting module of highest weight $c \in \mathbb{Z}_{\geq 0}$. For $p\leq c\leq 2p-2$, recall that $\dim (T(c)) = 2p$ and the
non-identity unipotent elements in ${\rm SL}_2(k)$ act with 2 Jordan blocks of size $p$, see \cite[Lemma 5.3]{K2}. For larger $c$, we have a tensor product
decomposition of $T(c)$ as determined in
\cite[Lemma 5]{Erdmann_Henke}.

Let $V$ be a finite-dimensional $k$-vector space, and $u \in {\rm GL}(V)$ of order $p$. The action of $u$ on $V$ can be described in terms
of $k [u]$-modules. There exist exactly $p$ indecomposable $k [u]$-modules which we will
denote by $J(1), \dots, J(p)$ where the dimension of $J(c)$ is $c$, and $u$ acts on $J(c)$ as a full Jordan block. See the preliminaries of
\cite{Renaud} for more detail on the structure of these modules.

The following three results will allow us to reduce our considerations to a very short list of unipotent classes in $G$.

\begin{prop}\label{nonsimple}  Let $u\in G$ be of order $p$, with $F(u) = u$ and $u\in Y = Y_1Y_2$ with $[Y_1,Y_2] = 1$, $Y_i$ a non-trivial semisimple $F$-invariant
  closed subgroup of $G$, and $u\not\in Y_i$ for $i=1,2$.
  Then $u$ lies in non-conjugate $A_1$-subgroups of $G$.
\end{prop}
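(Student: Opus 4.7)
The plan is to exhibit two explicit non-$G$-conjugate $A_1$-subgroups of $G$ containing $u$, built as diagonal embeddings in a commuting product $X_1 X_2$ inside $Y_1 Y_2$, and to distinguish them via their weights on $\mathfrak{g}$.

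First I would produce commuting non-identity unipotents $u_1 \in Y_1,\ u_2 \in Y_2$ of order $p$ with $u = u_1 u_2$. Writing $u = y_1 y_2$ with $y_i \in Y_i$ (which commute because $[Y_1,Y_2]=1$), the Jordan decompositions $y_i = s_i n_i$ all pairwise commute, so $u = (s_1 s_2)(n_1 n_2)$, and uniqueness of Jordan decomposition for the unipotent $u$ forces $u = n_1 n_2$. The hypothesis $u \notin Y_i$ gives $n_1, n_2 \neq 1$, and $n_1^p n_2^p = 1$ places each $n_i^p$ in $Y_1 \cap Y_2$, which is central in each $Y_j$ and hence of multiplicative type; it therefore contains no nontrivial unipotent, so $n_i^p = 1$. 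Setting $u_i := n_i$, Seitz's theorem \cite{SeitzgoodA1} supplies a good $A_1$-subgroup $X_i \leq Y_i$ containing $u_i$, with $[X_1,X_2] = 1$.

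Next, fix isomorphisms $\iota_i : \mathrm{SL}_2 \to X_i$ sending the standard upper-triangular unipotent matrix $v$ to $u_i$, and let $F$ denote the standard Frobenius endomorphism of $\mathrm{SL}_2$. Because $X_1$ and $X_2$ commute, the maps $\psi_1(x) = \iota_1(x)\iota_2(x)$ and $\psi_2(x) = \iota_1(x)\iota_2(F(x))$ are group homomorphisms $\mathrm{SL}_2 \to G$. Both have finite kernel (as $X_1 \cap X_2$ is finite), so their images $D_1, D_2 \leq G$ are closed connected $A_1$-subgroups. Since $F(v) = v$ (the entries of $v$ lie in $\mathbb{F}_p$), we have $\psi_j(v) = u_1 u_2 = u \in D_j$ for $j = 1, 2$.

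Finally, to show $D_1$ and $D_2$ are non-$G$-conjugate I would compute the weights of the maximal torus of $D_j$ on $\mathfrak{g}$. Let $\mu_i(s) = \iota_i(\mathrm{diag}(s, s^{-1}))$ and let $m_{(a,b)}$ denote the multiplicity of the weight $(a,b)$ of $(\mu_1,\mu_2)$ on $\mathfrak{g}$; the Weyl groups of $X_1$ and $X_2$ give $m_{(a,b)} = m_{(-a,b)} = m_{(a,-b)}$, whence $\sum_{(a,b)} m_{(a,b)}\,ab = 0$. The maximal torus of $D_j$ is the image of $s \mapsto \mu_1(s)\mu_2(s^{p^{j-1}})$, acting with weight $a + p^{j-1} b$ on the $(a,b)$-weight space, so the sum of squared weights of $D_j$ on $\mathfrak{g}$ equals $\sum m_{(a,b)} a^2 + p^{2(j-1)} \sum m_{(a,b)} b^2$. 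The second sum is strictly positive because $\mu_2(\mathbb{G}_m)$ has non-zero weights on $\mathrm{Lie}(X_2) \subseteq \mathfrak{g}$; hence the two sums differ, the weight multisets disagree, and $D_1, D_2$ cannot be $G$-conjugate. I expect the main obstacle to be the careful Jordan-decomposition bookkeeping in the first step together with the verification that each $\psi_j$ produces a genuine three-dimensional $A_1$-subgroup of $G$; once these are in place, the weight-multiset invariant cleanly separates the two conjugacy classes.
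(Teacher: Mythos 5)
Your proof is correct and follows essentially the same route as the paper's: write $u = u_1u_2$ as a product of commuting unipotent elements of order $p$ with $u_i \in Y_i$, embed each $u_i$ in an $A_1$-subgroup $X_i \subseteq Y_i$, form the diagonal $A_1$-subgroup and a Frobenius-twisted diagonal $A_1$-subgroup, and separate them by their weights on ${\rm Lie}(G)$. The only differences are minor: you invoke \cite{SeitzgoodA1} where the paper cites \cite{TestA1} for the $A_1$-overgroups, you twist by the standard Frobenius of ${\rm SL}_2(k)$ rather than choosing $F$-stable $X_i$ and applying powers of the Frobenius of $G$ (thereby never using the $F$-invariance hypotheses), and you spell out both the Jordan-decomposition bookkeeping and the weight computation on ${\rm Lie}(G)$ that the paper leaves implicit.
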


\begin{proof} Write $u = u_1u_2$ for $u_i\in Y_i\setminus\{1\}$. By \cite{TestA1}, for $i=1,2$, there exist  closed subgroups $X_i\subseteq Y_i$, of
  type $A_1$ with $u_i\in X_i$. Moreover,
  we may choose $X_i$ to be $F$-invariant and such that $F(u_i) = u_i$ for $i=1,2$. Fix separable isogenies $\varphi_i:{\rm SL}_2(k)\to X_i$, with $\varphi_i(\begin{pmatrix} 1&1\cr 0&1\end{pmatrix}) = u_i$. Let $Z_j$
  be the image of the map ${\rm SL}_2(k)\to X_1X_2$ given by $x\mapsto \varphi_1(x)\cdot F^j((\varphi_2(x))$, for $x\in {\rm SL}_2(k)$. Then considering
  the action of $Z_0$ and $Z_1$ on ${\rm Lie}(G)$, we see that $Z_0$ and $Z_1$ are non-conjugate $A_1$-subgroups of $G$ with $u\in Z_0\cap Z_1$.\end{proof}

\begin{prop} \label{p:longroot} Let $G$ be  a simple algebraic group and $u\in G$ a long root element of $G$. Then any two $A_1$-subgroups containing $u$ are $G$-conjugate.
\end{prop}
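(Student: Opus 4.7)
The plan is to reduce the statement to the classification, due to Liebeck and Seitz in \cite{LS_root}, of closed subgroups of $G$ generated by long root elements.

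First I would show that any $A_1$-subgroup $X \leqs G$ containing $u$ is itself generated by long root elements of $G$. The subgroup $X$ is isogenous to $\mathrm{SL}_2(k)$, and since $k$ is algebraically closed every element of $k^*$ is a square; consequently a maximal torus $T_X$ of $X$ acts transitively on the nonidentity elements of each of its two opposite root subgroups, while the nontrivial Weyl element of $X$ interchanges these root subgroups. It follows that all nonidentity unipotent elements of $X$ form a single $X$-conjugacy class, so each of them is $G$-conjugate to $u$, and is therefore a long root element of $G$. Since $X$ is generated by its two opposite root subgroups, $X$ is generated by long root elements of $G$.

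Next I would appeal to the main theorem of \cite{LS_root}, which classifies closed connected subgroups of $G$ generated by long root elements. In the $A_1$-case this forces $X$ to be $G$-conjugate to the standard subgroup $\langle U_\alpha, U_{-\alpha} \rangle$ for some long root $\alpha$; since the long roots of $G$ lie in a single Weyl group orbit, the choice of $\alpha$ is immaterial. Applying this to two $A_1$-subgroups $X_1, X_2$ containing $u$ then shows that both are $G$-conjugate to this fixed subgroup, and hence to each other.

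The main obstacle is the classification \cite{LS_root} itself: that is where all the substantive content sits. Once the classification is in hand, the generation argument above is routine, as is the final extraction of the desired $G$-conjugacy.
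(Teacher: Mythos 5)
Your proposal is correct and takes essentially the same approach as the paper, whose entire proof is the single citation of \cite[Theorem 2.1]{LS_root} --- the same classification of subgroups generated by long root elements that you invoke. The bridging steps you make explicit (all nonidentity unipotent elements of an $A_1$-subgroup are conjugate within it, hence are long root elements of $G$, so the subgroup is generated by long root elements; then transitivity of the Weyl group on long roots pins down a single conjugacy class) are precisely the routine reduction the paper leaves implicit.
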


\begin{proof} This follows from \cite[Theorem 2.1]{LS_root}. \end{proof}

\begin{prop}\cite[Proposition 2.11]{BT_reg} \label{p:regular}
Let $G$ be  a simple algebraic group and $u\in G$ a regular unipotent element of $G$ of order $p$. Then any two $A_1$-subgroups containing $u$ are $G$-conjugate.
\end{prop}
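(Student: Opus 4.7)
The plan is to show that, under the hypotheses, every $A_1$-subgroup of $G$ containing $u$ is a \emph{good} $A_1$-subgroup in the sense of Seitz, after which the $G$-conjugacy follows immediately from \cite{SeitzgoodA1}. Since a regular unipotent element in a simple algebraic group in good characteristic has order $p^{a}$, where $a$ is the least positive integer with $p^{a} \geqs h(G)$ (the Coxeter number of $G$), the hypothesis $|u| = p$ forces $p \geqs h(G)$.

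Let $X$ be any $A_1$-subgroup of $G$ containing $u$. By Steinberg's theorem, $u$ lies in a unique Borel subgroup $B$ of $G$; the same statement applied inside $X$ places $u$ in a unique Borel $B_X$ of $X$, necessarily contained in $B$. Let $T_X$ be a maximal torus of $B_X$, let $U_X$ be its unipotent radical, and let $\lambda_X: \mathbb{G}_m \to G$ be the cocharacter with image $T_X$ normalized so that $T_X$ acts on $U_X$ with weight $+2$. The $A_1$-structure of $X$ yields an $\mathfrak{sl}_2$-triple $(e,h,f) \subseteq \mathrm{Lie}(X) \subseteq \mathfrak{g}$, where $e$ spans $\mathrm{Lie}(U_X)$, $h$ is the derivative of $\lambda_X$, and $f$ spans the opposite root space of $\mathrm{Lie}(X)$. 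Under $p \geqs h(G)$ and the standing good-characteristic hypothesis, the exponential map restricts to a bijection on the nilpotent/unipotent locus, with $\exp(e) = u$ after an appropriate scaling of $e$. Regularity of $u$ in $G$ then translates to regularity of $e$ in $\mathfrak{g}$, so $h$ is a principal semisimple element and $\lambda_X$ is the principal cocharacter of $G$ (up to conjugation), with $\alpha(\lambda_X) = 2$ for every simple root $\alpha$.

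Consequently, the weights of $T_X$ on $\mathfrak{g}$ form the set $\{2\,\mathrm{ht}(\alpha) : \alpha \in \Phi\} \cup \{0\}$, bounded in absolute value by $2(h(G)-1) \leqs 2p - 2$. Hence $X$ is a good $A_1$-subgroup, and \cite{SeitzgoodA1} gives that any two good $A_1$-subgroups containing $u$ are conjugate under $R_u(C_G(u))$, and in particular $G$-conjugate.

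The main obstacle is the Lie-theoretic bookkeeping in good but potentially small characteristic: one must justify the exponential correspondence between the nilpotent and unipotent loci, the identification $\exp(e) = u$, and the transfer of regularity from $u$ to $e$. Each of these is standard under $p \geqs h(G)$. An alternative to the exponential argument is a direct root-theoretic calculation: the regularity of $u$ implies its image in $U/[U,U]$ is a simple-root combination with every coefficient nonzero, and matching the $\lambda_X(t)$-action on the $1$-parameter subgroup $U_X$ (scaling by $t^{2}$) against its action on individual root subgroups (scaling by $t^{\alpha(\lambda_X)}$) forces $\alpha(\lambda_X) = 2$ on every simple root; the subtlety here is to rule out higher $p$-power values coming from a Frobenius twist in the additive polynomial describing the projection $U_X \to U/[U,U]$. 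Once $\lambda_X$ has been identified with the principal cocharacter, the remainder of the argument is a direct appeal to Seitz's unicity theorem.
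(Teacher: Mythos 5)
There is a genuine gap at the decisive step. Your reduction---show that every $A_1$-overgroup $X$ of $u$ is a \emph{good} $A_1$ and then invoke Seitz---is a reasonable strategy (and the deduction $|u|=p \Rightarrow p \geqslant h(G)$ is correct), but the argument you give for goodness does not go through. The identification $\exp(e)=u$, and with it the transfer of regularity from $u$ to $e$, is false in general in characteristic $p$: a one-dimensional connected unipotent subgroup of $G$ is \emph{not} determined by its Lie algebra. For instance, if $U_\alpha,U_\beta$ are commuting root subgroups, the image of $t \mapsto x_\alpha(t)x_\beta(t^p)$ is a one-dimensional connected unipotent subgroup whose Lie algebra is spanned by $e_\alpha$ alone; so from the fact that $e$ spans $\mathrm{Lie}(U_X)$ you cannot recover $u$, and $e$ may a priori fail to be regular nilpotent even though $u$ is regular unipotent. (This is exactly the Frobenius-twist phenomenon, visible already for $x\mapsto (x,F(x))$ in a product of two $A_1$'s, as in the proof of Proposition \ref{nonsimple}.) Equivalently, in your alternative root-theoretic formulation, $T_X$-equivariance of the projection $U_X \to U/[U,U]$ only yields $\alpha(\lambda_X) = 2p^{i(\alpha)}$ for some integers $i(\alpha)\geqslant 0$; ruling out $i(\alpha)\geqslant 1$ is precisely the mathematical content of the proposition, not ``standard bookkeeping'', and you explicitly leave it open. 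If some $i(\alpha)\geqslant 1$ occurred, then $\lambda_X$ would have a weight at least $2p > 2p-2$ on $\mathfrak{g}$, so $X$ would not be good and Seitz's unicity theorem---which compares only good $A_1$-subgroups with one another---could not be applied at all.

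For comparison: the paper gives no proof of its own; the statement is quoted from Burness--Testerman \cite[Proposition 2.11]{BT_reg}, where exactly this difficulty is confronted. There one shows that an $A_1$-subgroup containing a regular unipotent element is $G$-irreducible and then identifies its class with that of the principal $A_1$, which in the exceptional groups requires genuine case analysis rather than a formal exponential/cocharacter computation. Your outline would become a correct proof precisely if you supplied an argument forcing all $i(\alpha)=0$---in classical types this can be done by analyzing the possible $X$-module structures on the natural module, where tensor-decomposition arguments in the spirit of Lemmas \ref{twofold_tensors} and \ref{threefold_tensors} show a single Jordan block forces a single (possibly twisted) restricted irreducible summand, hence the principal subgroup---but as written, the central claim is assumed rather than proved.
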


\section{Classical Groups}

Let $G$ be a simple algebraic group of classical type. Given the assumption that $p$ is good for $G$, we are able to parametrise the unipotent classes via partitions. A given partition corresponds to the
Jordan blocks of the element in the natural representation of the classical group, and with the exception of type $D_n$, a given partition corresponds to at most one class of unipotent elements in the classical group. We write the Jordan blocks corresponding to the action of unipotent elements in the form $(J_k^{r_k} + \dots + J_1^{r_1})$ where $r_i \geq 0$ for all $i$.

In the following three lemmas, let  $\langle u\rangle$ be a cyclic group of order $p$; we consider the decompositions of tensor products of indecomposable $k[u]$-modules.

\begin{lemma}\label{tensors_induction}  Let $2 \leq i, j \leq p$ and suppose $(J(i-1)\otimes J(j))\downarrow {k[u]} = \oplus_{r=1}^t J(n_r)$ for some $n_r\leq p$. Then for some $k[u]$-module $N$, we have
  $J(i)\otimes J(j) = \oplus_{r=1}^t J(\ell_r)\oplus N$, where $\ell_r\geq n_r$ for all $1\leq r\leq t$.
\end{lemma}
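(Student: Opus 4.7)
The plan is to realize $J(i-1)\otimes J(j)$ as a $k[u]$-submodule of $J(i)\otimes J(j)$, and then invoke a general comparison of Jordan types for pairs of nested $k[u]$-modules.

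First, the unique submodule of $J(i)$ of codimension one is $(u-1)J(i)$: it is cyclic, generated by $(u-1)\cdot v$ for any generator $v$ of $J(i)$, and hence isomorphic to $J(i-1)$. Tensoring the inclusion $J(i-1)\hookrightarrow J(i)$ with $J(j)$ over $k$ preserves injectivity (since $k$ is a field) and yields a $k[u]$-equivariant embedding $J(i-1)\otimes J(j)\hookrightarrow J(i)\otimes J(j)$.

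The core ingredient is the following general lemma: if $M'\subseteq M$ are finite-dimensional $k[u]$-modules on which $u-1$ acts nilpotently, and their Jordan partitions (sorted in decreasing order and padded with zeros) are $\lambda(M')=(n_1\geq n_2\geq\cdots)$ and $\lambda(M)=(m_1\geq m_2\geq\cdots)$, then $m_r\geq n_r$ for every $r$. To prove it, I would count the number of Jordan blocks of size $\geq k$ in a module $N$ as $\dim\ker(u-1)^k|_N - \dim\ker(u-1)^{k-1}|_N$, and show that the canonical linear map
\[
\frac{\ker(u-1)^{k-1}|_M}{\ker(u-1)^{k-1}|_{M'}} \;\longrightarrow\; \frac{\ker(u-1)^k|_M}{\ker(u-1)^k|_{M'}}
\]
is injective: its kernel consists of classes of elements $x\in M'$ with $(u-1)^{k-1}x=0$, which is exactly $\ker(u-1)^{k-1}|_{M'}$. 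Consequently the conjugate partition of $M$ dominates that of $M'$ pointwise, which is equivalent to the desired pointwise inequality on the original partitions.

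Applying the lemma to the embedding above and relabeling the summands so that each $J(n_r)$ in the given decomposition of $J(i-1)\otimes J(j)$ is paired with a block of appropriate rank in $J(i)\otimes J(j)$ yields the decomposition $\oplus_{r=1}^{t} J(\ell_r)\oplus N$ with $\ell_r\geq n_r$. The only substantive step is the general comparison lemma; everything else is formal.
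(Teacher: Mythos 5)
Your proposal is correct and takes essentially the same route as the paper: realize $J(i-1)\otimes J(j)$ as a $k[u]$-submodule $S$ of $J(i)\otimes J(j)$ and deduce pointwise containment of the Jordan types from a general comparison for nested modules under a nilpotent operator. The paper's very terse proof records the inclusions $S^u\subseteq (J(i)\otimes J(j))^u$ and $(u-1)^m(S)\subseteq (u-1)^m(J(i)\otimes J(j))$ and asserts the conclusion, whereas your count of blocks of size at least $k$ via $\dim\ker(u-1)^k - \dim\ker(u-1)^{k-1}$, together with the injectivity of the induced map on quotients, is a fully worked-out (and correct) verification of the same comparison.
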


\begin{proof} We have that $J(i-1)$ is a $k[u]$-submodule of $J(i)$. So $S:=J(i-1)\otimes J(j)$ is a  $k[u]$-submodule of $J(i)\otimes J(j)$. Moreover,
  $S^u\subseteq (J(i)\otimes J(j))^u$ and $(u-1)^m(S)\subseteq (u-1)^m(J(i)\otimes J(j))$ for all $m\geq 1$. These two facts give the result.
\end{proof}

\begin{lemma}\label{twofold_tensors} Let $1<m\leq n \leq p$.  If $(m,n) \not\in \{ (2,2),(2,p) \}$, the direct sum decomposition of $J(m) \otimes J(n)$ into indecomposable $k[u]$-modules contains either:
\begin{enumerate}[(a)]
    \item at least three non-trivial indecomposable summands; or
    \item two non-trivial indecomposable summands of distinct dimension.
\end{enumerate}
Moreover, $J(2) \otimes J(2) \cong J(3) \oplus J(1)$ if $p \neq 2$, and $J(2) \otimes J(p) \cong (J(p))^2$. 
\end{lemma}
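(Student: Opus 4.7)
The plan is to split into cases after disposing of the two explicit identities. The identity $J(2)\otimes J(2)\cong J(3)\oplus J(1)$ for $p\neq 2$ is just Clebsch--Gordan. For $J(2)\otimes J(p)$, note that $J(p)$ is the regular representation of $\langle u\rangle\cong\Z/p\Z$, hence projective as a $k[u]$-module; tensoring with any module yields another projective, so $J(2)\otimes J(p)$ is projective of dimension $2p$, forcing $J(2)\otimes J(p)\cong J(p)^{\oplus 2}$.

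In the Clebsch--Gordan regime $m+n\leq p+1$, the classical formula
\[
J(m)\otimes J(n)\cong \bigoplus_{i=0}^{m-1}J(m+n-1-2i)
\]
produces $m$ summands of pairwise distinct dimensions, with $J(1)$ appearing iff $m=n$. A short sub-case check then confirms (a) or (b): $(m,n)=(2,n)$ with $n\geq 3$ gives $J(n+1)\oplus J(n-1)$ (case (b)); $(m,n)=(3,3)$ gives $J(5)\oplus J(3)\oplus J(1)$ (case (b)); all other sub-cases have at least three non-trivial distinct summands (case (a)). Only the excluded pair $(2,2)$ fails. When $n=p$ and $m\leq n$, projectivity of $J(p)$ gives $J(m)\otimes J(p)\cong m\cdot J(p)$, so $m\geq 3$ yields (a) while $m=2$ is the excluded pair $(2,p)$.

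The remaining case is $m+n\geq p+2$ with $n\leq p-1$, which forces $m\geq 3$ (otherwise $m=2$ would force $n\geq p$). Here I would use the tilting module viewpoint from the preliminaries: since $J(c)=T(c-1)$ for $c\leq p$, the tensor product $J(m)\otimes J(n)$ is tilting with character $\sum_{i=0}^{m-1}L(m+n-2-2i)$. Each composition factor $L(c)$ with $c\geq p$ pairs with $L(2p-2-c)$, present in the sum because $n\leq p$, to form an indecomposable $T(c)$ of dimension $2p$ restricting to $u$ as $2J(p)$; the unpaired factors with $c\leq p-1$ are themselves tiltings $T(c)=J(c+1)$. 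Carrying out the bookkeeping yields
\[
J(m)\otimes J(n)\cong (m+n-p)\cdot J(p)\ \oplus\ \bigoplus_{i=0}^{p-n-1}J(n-m+1+2i),
\]
a total of $m$ indecomposable summands, at most one of which is $J(1)$ (occurring exactly when $m=n$). Since $m\geq 3$ and the degenerate pair $(m,n)=(3,3)$ cannot arise here (it would require $p\leq 4$ yet $n\leq p-1$), there are at least three non-trivial summands, so (a) holds.

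The main obstacle is the tilting-module accounting in this third case, namely verifying that the proposed pairing of composition factors is the correct tilting decomposition and that no additional summands appear. A natural alternative is induction on $m$ via Lemma~\ref{tensors_induction} starting from the previous two cases as base, but controlling the "new" summands produced at each step is delicate, and the character pairing argument seems the most direct route.
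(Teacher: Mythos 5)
Your proposal is correct, but it takes a genuinely different route from the paper. The paper does not derive any decomposition itself: it simply reads off the closed-form decompositions from \cite[Theorem 1]{Renaud} for the base cases $m=2$ (all $n$), $(m,n)=(3,3)$ and $(m,n)=(3,4)$, and then propagates ``at least three non-trivial summands'' to all $m\geq 3$, $n\geq 4$ by repeated application of Lemma~\ref{tensors_induction} --- this works painlessly because that lemma guarantees the existing summands can only grow in dimension, so non-triviality persists; the ``new summands'' you worried about never need to be controlled. (So the inductive alternative you dismiss as delicate is in fact exactly the paper's argument, and it is less delicate than you feared.) Your approach instead proves the full decomposition in every regime: Clebsch--Gordan for $m+n\leq p+1$, projectivity of $J(p)=k[\langle u\rangle]$ for $n=p$ (which also disposes of both ``moreover'' identities cleanly), and the tilting-module character bookkeeping for $m+n\geq p+2$, $n\leq p-1$. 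Your displayed formula $(m+n-p)\cdot J(p)\oplus\bigoplus_{i=0}^{p-n-1}J(n-m+1+2i)$ is correct (it agrees with Renaud's formula; e.g.\ for $p=5$, $(m,n)=(3,4)$ it gives $J(5)^2\oplus J(2)$ as in the paper), including the parity subtlety at weight $p-1$, where the self-paired $T(p-1)=L(p-1)$ restricts as a single $J(p)$ and the count $2\cdot\#\{c\geq p\}+\#\{c=p-1\}=m+n-p$ works out in both parities; the bookkeeping is completable by the standard argument of splitting off the tilting summand of highest weight and inducting on characters. Two small points of hygiene: the character of the tensor product is a sum of Weyl characters $\chi(m+n-2-2i)$, not of simples $L(m+n-2-2i)$ as you wrote; and your final count in the third case should spell out that a $J(1)$ summand forces $m=n$, so $m=3$ with a trivial summand would force the impossible pair $(3,3)$, while $m\geq 4$ leaves $m-1\geq 3$ non-trivial summands. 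What your route buys is a self-contained proof with strictly more information (total of $m$ summands, exact multiplicity of $J(p)$); what the paper's route buys is brevity, outsourcing all computation to the cited formulas.
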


\begin{proof}
Calculations of the decomposition of tensor products follow from the formulae in \cite[Theorem 1]{Renaud}.
  
First suppose that $m=2$, then $J(2) \otimes J(n)$ is given by $(J(p))^2$ if $n=p$, and $J(n+1) \oplus J(n-1)$ if $n < p$. Hence if $2< n<p$, then $J(2) \otimes J(n)$ contains two non-trivial indecomposable summands of distinct dimension.

Now suppose that $m=n=3$, and note that it follows that $p>2$. In this case we have that $J(3) \otimes J(3)$ is given by $(J(3))^3$ if $p=3$, and $J(5) \oplus J(3) \oplus J(1)$ if $p\geq5$. Hence $J(3) \otimes J(3)$ contains either three non-trivial indecomposable summands, or two non-trivial indecomposable summands of distinct dimension. 

Next suppose that $m=3,n=4$, and note that it follows that $p\geq 5$. In this case we have that $J(3) \otimes J(4)$ is given by $J(5)^2 \oplus J(2)$ if $p=5$, and $J(6) \oplus J(4) \oplus J(2)$ if $p>5$. Hence the direct sum decomposition of $J(3) \otimes J(4)$ contains three non-trivial indecomposable summands.
For $m \geq 3, n \geq 4$, it follows from repeated applications of Lemma \ref{tensors_induction} that $J(m) \otimes J(n)$ contains at least three non-trivial indecomposable summands.
\end{proof}

\begin{lemma}\label{threefold_tensors} For all $t \geq 3$, and $2 \leq m_1 \leq \dots \leq m_t \leq p$, the direct sum decomposition of $J(m_1) \otimes J(m_2) \otimes \dots \otimes J(m_t)$ into indecomposable $k[u]$-modules contains at least three non-trivial indecomposable summands. 
\end{lemma}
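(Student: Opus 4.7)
The plan is to argue by induction on $t$. The backbone observation is what I shall call the \emph{preservation principle}: for any integers $a, b \geq 2$, the tensor product $J(a) \otimes J(b)$ contains at least one non-trivial indecomposable $k[u]$-summand. Indeed, $u$ acts non-trivially on $J(a) \otimes J(b)$, since taking $v \in J(a)$ with $(u-1)v \neq 0$ and a fixed vector $w \in J(b)^u$ yields $(u-1)(v \otimes w) = (u-1)v \otimes w \neq 0$. Consequently, if a $k[u]$-module $M$ has at least $s$ non-trivial indecomposable summands, then so does $M \otimes J(m)$ for any $m \geq 2$, because each non-trivial summand of $M$ contributes at least one non-trivial summand upon tensoring with $J(m)$.

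Granted this, the inductive step is immediate: assuming the result for $t$, I apply the induction hypothesis to $J(m_1) \otimes \cdots \otimes J(m_t)$ to produce at least three non-trivial summands, then invoke preservation after tensoring with $J(m_{t+1})$.

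For the base case $t = 3$, I case-split on $(m_1, m_2)$. If $(m_1, m_2) \notin \{(2,2), (2,p)\}$, Lemma \ref{twofold_tensors} applies: if $J(m_1) \otimes J(m_2)$ already has at least three non-trivial summands, preservation finishes; otherwise it has exactly two non-trivial summands of distinct dimensions, possibly together with trivial summands. If a trivial summand is present, then tensoring it with $J(m_3)$ produces an extra $J(m_3)$, yielding a third non-trivial summand. If no trivial summand is present, inspection of the explicit decompositions in the proof of Lemma \ref{twofold_tensors} forces $J(m_1) \otimes J(m_2) = J(n+1) \oplus J(n-1)$ with $m_1 = 2$, $m_2 = n$, and $3 \leq n \leq p-1$; re-applying Lemma \ref{twofold_tensors} to each of $J(n+1) \otimes J(m_3)$ and $J(n-1) \otimes J(m_3)$ (observing that neither pair lies in the exceptional set, and using $J(2) \otimes J(p) = J(p)^2$ to cover the corner $n = 3$, $m_3 = p$) gives at least two non-trivial summands in each factor, hence four in total. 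The remaining cases $(m_1, m_2) \in \{(2,2), (2,p)\}$ are dispatched by direct computation from the Renaud formula, using $J(p) \otimes J(p) = J(p)^p$ in the case $(2,p)$ (which forces $m_3 = p$) and splitting on $m_3$ for the case $(2,2)$.

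The main obstacle is the subcase of the base step in which $J(m_1) \otimes J(m_2) = J(n+1) \oplus J(n-1)$ has exactly two non-trivial summands and no trivial part, since the preservation principle alone only guarantees two non-trivial summands after tensoring with $J(m_3)$; one must re-apply Lemma \ref{twofold_tensors} to each of $J(n\pm 1) \otimes J(m_3)$ to produce the third summand.
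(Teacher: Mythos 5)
Your proof is correct, and its skeleton (induction on $t$ with base case $t=3$) matches the paper's, but the mechanics differ in both halves. In the inductive step the paper cites Lemma~\ref{twofold_tensors} to see that each $J(k_i)\otimes J(m_t)$ contributes a non-trivial summand, whereas your ``preservation principle'' proves this from scratch via the fixed-vector computation $(u-1)(v\otimes w)=(u-1)v\otimes w\neq 0$; this makes the step independent of the Renaud decomposition formulas, a genuine (if minor) gain in self-containedness. The real divergence is the base case: the paper computes $J(2)\otimes J(2)\otimes J(2)$ explicitly and then bootstraps to all triples by repeated applications of the monotonicity Lemma~\ref{tensors_induction} (summand sizes can only grow when an index is increased), while you avoid Lemma~\ref{tensors_induction} entirely and instead analyse the two-fold product $J(m_1)\otimes J(m_2)$, splitting on whether a trivial summand is present. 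Your handling of the delicate subcase is sound: the only products with exactly two non-trivial summands and no trivial part are indeed $J(2)\otimes J(n)=J(n+1)\oplus J(n-1)$ with $3\le n\le p-1$ (the $(3,3)$-product for $p\ge 5$ carries a $J(1)$, and all products with both factors $\ge 3$, $\ge 4$ fall in case (a)), and your re-application of Lemma~\ref{twofold_tensors} to $J(n\pm 1)\otimes J(m_3)$ works because $n+1\le p$, both relevant pairs avoid $\{(2,2),(2,p)\}$ except the corner $n=3$, $m_3=p$, which you correctly dispatch with $J(2)\otimes J(p)=J(p)^2$. One caveat worth flagging: your ``inspection'' step relies on the \emph{proof} of Lemma~\ref{twofold_tensors}, not merely its statement, since the statement does not record which case-(b) decompositions contain trivial summands; this is legitimate but means your argument is not a black-box application of that lemma. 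On balance, the paper's route buys brevity by reusing Lemma~\ref{tensors_induction}, while yours buys a base case that tracks the decompositions explicitly at the cost of a more intricate case analysis.
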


\begin{proof}
We first consider the case $t = 3$.  Let $m_1=m_2=m_3=2$, then $J(2) \otimes J(2) \otimes J(2)$ is equal to $(J(2))^4$ if $p=2$, $(J(3))^2\oplus J(2)$ if $p=3$, and $J(4)\oplus(J(2))^2$ if $p>3$.
In each case, the direct sum decomposition contains at least three non-trivial indecomposable summands. 
Now suppose that $2 \leq m_1 \leq m_2 <m_3$, then it follows from repeated applications of Lemma \ref{tensors_induction} that the direct sum decomposition of $J(m_1) \otimes J(m_2) \otimes J(m_3)$ contains at least three non-trivial indecomposable summands. 

We proceed inductively on $t$, and suppose now that $t>3$. Suppose that the direct sum decomposition of any $n$-fold tensor product where $n<t$ contains at least three non-trivial indecomposable summands. 
We consider $S:= J(m_1) \otimes J(m_2) \otimes \dots \otimes J(m_{t})$. Set $S':= J(m_1) \otimes J(m_2) \otimes \dots \otimes J(m_{t-1})$, so that $S = S' \otimes J(m_t)$. We have that $S'$ is equal to $J(k_1)\oplus J(k_2) \oplus \dots \oplus J(k_l)$ for some $k_1,k_2,k_3>1$, $l \geq 3$. 
We have that $S$ is given by $(J(k_1)\otimes J(m_t)) \oplus \dots \oplus (J(k_l)\otimes J(m_t))$. By Lemma \ref{twofold_tensors} each term $(J(k_i) \otimes J(m_t))$ contributes at least one non-trivial indecomposable summand for $i \in \{1,2,3\}$. Hence the direct sum decomposition of $S$ contains at least three non-trivial indecomposable summands.
\end{proof}

We can now establish Theorem \ref{t:mainclassical} for $G$ of type $A_n$.

\begin{prop} \label{prop:SLn}
  Let $G $ be $\rm{SL}(W)$ and let $u\in G$ be unipotent of order $p$. Then any two $A_1$-subgroups containing $u$ are $G$-conjugate if and only if 
     $u$ acts on $W$ with Jordan blocks $(J_{\ell}+J_1^r)$, for $\ell \leq p$, where if $\ell \in \{3,p\}$ then $r = 0$. 
\end{prop}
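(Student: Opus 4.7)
I plan to parametrise the $A_1$-subgroups of $G = {\rm SL}(W)$ containing $u$ by the corresponding ${\rm SL}_2$-module structures on $W$, noting that two such structures yield the same closed subgroup of $G$ when they differ by a Frobenius twist (the standard Frobenius endomorphism of ${\rm SL}_2$ is bijective on $k$-points, so the image subgroups coincide). Under this translation, two $A_1$-subgroups are $G$-conjugate if and only if the associated ${\rm SL}_2$-module structures on $W$ are isomorphic up to Frobenius twisting, and the proposition becomes the statement that the module structure on $W$ is determined by the Jordan type of $u$ precisely for the Jordan types listed.

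For the backward direction I exhibit non-conjugate $A_1$-subgroups for every other Jordan type. If $u$ has at least two Jordan blocks of size $\geq 2$ (so its type is not of the form $(J_\ell + J_1^r)$), I decompose $W = V_1 \oplus V_2$ into $u$-stable subspaces with $u = u_1 u_2$ and each $u_i \neq 1$, and apply Proposition~\ref{nonsimple} with $Y_i = {\rm SL}(V_i)$. For type $(J_3 + J_1^r)$ with $r \geq 1$, the two modules $L(2) \oplus L(0)^r$ and $L(p+1) \oplus L(0)^{r-1}$ both realise this Jordan type: indeed $L(p+1) = L(1) \otimes L(1)^{[F]}$ restricts on $k[u]$ to $J(2) \otimes J(2) = J(3) + J(1)$; their composition factors differ even after any Frobenius twist, so they give non-conjugate subgroups. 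For type $(J_p + J_1^r)$ with $r \geq 1$, the analogous pair is $L(p-1) \oplus L(0)^r$ versus $W(p) \oplus L(0)^{r-1}$, using that the Weyl module $W(p)$ has dimension $p+1$ and $k[u]$-restriction $J(p) + J(1)$.

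For the forward direction, assume the Jordan type is in the list and let $X$ be any $A_1$-subgroup containing $u$; decompose $W$ into indecomposable $X$-summands. Exactly one summand $M_0$ carries the $J_\ell$ block, and all remaining summands have only $J_1$ blocks and hence must be trivial $L(0)$'s. So $M_0$ has $k[u]$-restriction $J(\ell) + J(1)^t$ for some $0 \leq t \leq r$. When $\ell < p$, every block of $M_0$ has size $< p$, which forces $M_0$ to be irreducible: indeed, the non-irreducible indecomposable ${\rm SL}_2$-modules with a composition factor of highest weight $\geq p$ contribute a $J(p)$ block (visibly so for $W(c), V(c), T(c)$ with $p \leq c \leq 2p-2$), while restricted irreducibles do not extend one another in the category of ${\rm SL}_2$-modules. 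By Steinberg's tensor product theorem $M_0 \cong \bigotimes_i L(c_i)^{[F^i]}$ with $k[u]$-restriction $\bigotimes_i J(c_i+1)$, and Lemma~\ref{threefold_tensors} rules out three or more nonzero digits, while Lemma~\ref{twofold_tensors} shows that two nonzero digits yield a tensor decomposition of the shape $J(\ell) + J(1)^t$ with $t \geq 1$ only in the exception $J(2) \otimes J(2) = J(3) + J(1)$, whence $\ell = 3$, $t = 1$, which is excluded. Hence $M_0 \cong L(\ell-1)^{[F^j]}$ and $W \cong L(\ell-1) \oplus L(0)^r$ up to Frobenius twist. The remaining case $\ell = p$, $r = 0$ is settled by a dimension count: the only indecomposable ${\rm SL}_2$-modules of dimension $p$ with a single $J(p)$ block are the Frobenius twists of $L(p-1)$.

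The main obstacle is the forward-direction classification of indecomposable $X$-summands whose restriction to $k[u]$ has the shape $J(\ell) + J(1)^t$, i.e.\ the job of excluding unexpected summands outside the known exceptions $\ell = 3$ and $\ell = p$; this is exactly what Lemmas~\ref{twofold_tensors} and~\ref{threefold_tensors} are designed to do, together with the standard structural facts about Weyl, dual Weyl and tilting modules for ${\rm SL}_2$ in positive characteristic.
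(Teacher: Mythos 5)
Your route is essentially the paper's: the same reduction via Proposition~\ref{nonsimple} to types $(J_\ell+J_1^r)$, the same two pairs of non-conjugate subgroups witnessing failure (namely $L(2)\oplus L(0)^r$ versus $(L(1)\otimes L(1)^{F})\oplus L(0)^{r-1}$, and $L(p-1)\oplus L(0)^r$ versus $W(p)\oplus L(0)^{r-1}$), and the same uniqueness mechanism via Steinberg's tensor product theorem combined with Lemmas~\ref{twofold_tensors} and~\ref{threefold_tensors}. The one place you diverge is where you try to prove, rather than quote, the complete-reducibility input, and there your argument has a genuine gap.

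You need the statement: every non-simple indecomposable rational ${\rm SL}_2(k)$-module on which a unipotent element of order $p$ acts without a Jordan block of size $p$ cannot exist, i.e.\ no $J(p)$ block forces semisimplicity. Your justification --- ``visibly so for $W(c)$, $V(c)$, $T(c)$ with $p\leq c\leq 2p-2$, while restricted irreducibles do not extend one another'' --- does not establish this. The indecomposable rational ${\rm SL}_2$-modules are far from exhausted by those three families in that weight window: an indecomposable may have many composition factors with highest weights outside $[p,2p-2]$ (Weyl modules $W(c)$ with $c\geq 2p$, Frobenius twists $M^{F^i}$ of non-split extensions, tensor products of these with twisted irreducibles, etc.), and its composition factors are in general not restricted, so the observation that restricted irreducibles do not extend one another (itself a linkage computation you omit) controls nothing beyond the first case. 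The assertion you need is precisely the content of \cite[Corollary 5.8]{K2}, which is the result the paper cites at exactly this point; inserting that citation repairs your proof, but as written the step is unproven, and it is the crux of the forward direction. The same criticism applies to your ``dimension count'' for $\ell=p$, $r=0$: that the only indecomposables of dimension $p$ with a single $J(p)$ block are twists of $L(p-1)$ is true but requires an Ext/linkage argument showing non-simple indecomposables have dimension at least $2p$; the paper sidesteps this entirely by observing that a single Jordan block means $u$ is regular in $G$ and invoking Proposition~\ref{p:regular}, which also disposes of the cases $(J_3)$ and $(J_\ell)$ with $r=0$ without any module analysis. (A minor point: in the $(J_3+J_1^r)$ construction you should note $p>3$, since for $p=3$ this type falls under $(J_p+J_1^r)$.)
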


\begin{proof} 
Using Proposition~\ref{nonsimple}, we reduce to elements having exactly one non-trivial Jordan block.

If $u$ acts on $W$ as a single Jordan block, then $u$ is regular in $G$, and the result follows by Proposition \ref{p:regular}.

Suppose that $u$ acts on $W$ with Jordan blocks $(J_{p}+J_1^r)$ for $r>0$. In this case we exhibit two non-conjugate $A_1$-subgroups of $G$ containing $u$.
The first of these can be obtained by the action of ${\rm SL}_2(k)$ on $L(p-1) \oplus N$, where $N$ is an $r$-dimensional trivial module. 
It follows from the block structure of $u$ on $W(p)$ that the action of ${\rm SL}_2(k)$ on $W(p) \oplus N'$, where $N'$ is an $(r-1)$-dimensional trivial module gives rise to a second class of $A_1$-subgroups containing $u$. 

Suppose now that $u$ acts on $W$ with Jordan blocks $(J_{3}+J_1^r)$  for $r>0$ where we may now assume that $p>3$. We exhibit two non-conjugate $A_1$-subgroups of $G$ containing $u$. 
The first of these can be obtained as in the previous paragraph, via the action of ${\rm SL}_2(k)$ on $L(2) \oplus N$, where $N$ is an $r$-dimensional trivial module. 
The second action is that of ${\rm SL}_2(k)$ on $(L(1) \otimes L(1)^{F}) \oplus N'$, where $N'$ is an $(r-1)$-dimensional trivial module  (see Lemma~\ref{twofold_tensors}).

Finally suppose that $u$ acts on $W$ with Jordan blocks $(J_{\ell}+J_1^r)$, $\ell \not \in \{3,p\}$. 
If $u$ lies in an $A_1$-subgroup $X$ of $G$, then by \cite[Corollary 5.8]{K2} we have that $X$ acts completely reducibly on $V$.
Lemmas \ref{twofold_tensors} and \ref{threefold_tensors} show that there is a unique (up to Frobenius twists and equivalence) $kX$-module such that 
the action of $u$ is given by $(J_{\ell}+J_1^r)$, $\ell \not \in \{3,p\}$. This is given by the action of ${\rm SL}_2(k)$ on $L(\ell-1) \oplus N$, 
where $N$ is an $r$-dimensional trivial module. 
\end{proof}

We now consider the classical groups of type $B_n,C_n$ and $D_n$, and recall that we are assuming that $p>2$. In addition, a unipotent element $u \in \mathrm{GL}(W)$ lies in the symplectic group ${\rm Sp}(W)$
if and only if each odd size
Jordan block occurs an even number of times and a unipotent element lies in the orthogonal group ${\rm SO}(W)$ if and only if each even size Jordan block occurs an even
number of times.

Let $X = {\rm SL}_2(k)$. Then given any homomorphism $\rho:X\to {\rm Sp}(W)$, we have that $W\downarrow X$ is an orthogonal direct
sum of irreducible modules, on each of  which $X$ preserves a non-degenerate symplectic form, and  modules of the form $M\oplus M^*$, for $M$ any
irreducible $kX$-module.
Moreover, $\rho(X)\subseteq {\rm Sp}_{2r_1}(k)\times\cdots\times {\rm Sp}_{2r_t}(k)$, each factor corresponding either to an irreducible summand or to
one of the modules $M\oplus M^*$. Now let $u\in{\rm Sp}(W)$ be a unipotent element. Applying Proposition~\ref{nonsimple}, we see that if all $A_1$-subgroups of $G={\rm Sp}(W)$ containing $u$ are $G$-conjugate, then
$u$ has Jordan block structure $(J_a+J_1^r)$, with $a$ even, $a < p$, or $(J_a^2+J_1^r)$, for some 
$a$ odd, $a\leq p$.

For $G$ of type $B_n$ or $D_n$, a similar reasoning reduces our considerations to elements whose Jordan block decomposition is of the form $(J_a+J_1^r)$,
 for some $a$ odd,
$a\leq p$, or of the form $(J_a^2+J_1^r)$, for some $a$ even, $a < p$.

We record the conclusions of this discussion in the following
\begin{lemma}\label{classical_reduction} Let $G$ be one of the simple algebraic groups ${\rm Sp}(W)$ or ${\rm SO}(W)$ defined over a field of
  characteristic $p>2$. Let $u\in G$ be a non-identity unipotent element. If any two $A_1$-subgroups of $G$ containing $u$ are $G$-conjugate then one of the following holds.
\begin{enumerate}[(a)]
\item $G = {\rm Sp}(W)$ and $u$ has Jordan block decomposition on $W$ of the form $(J_a+J_1^r)$, with $a$ even, $a < p$, or $(J_a^2+J_1^r)$ with $a$ odd, $a\leq p$; or
\item $G = {\rm SO}(W)$ and $u$ has Jordan block decomposition on $W$ of the form $(J_a+J_1^r)$, with $a$ odd, $a\leq p$, or $(J_a^2+J_1^r)$ with $a$ even, $a< p$.
\end{enumerate}

\end{lemma}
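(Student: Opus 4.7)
The plan is to convert the discussion preceding the lemma into a proof: if $u$ is not of the given form, then the natural module $W$ decomposes into enough $u$-invariant orthogonal pieces to apply Proposition~\ref{nonsimple} and produce two non-conjugate $A_1$-subgroups containing $u$.

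First I would observe that the hypothesis is non-vacuous only when $u$ lies in at least one $A_1$-subgroup of $G$, and since every non-identity unipotent element of ${\rm SL}_2(k)$ has order $p$, this forces $u$ to have order $p$ in $G$. Hence every Jordan block of $u$ on $W$ has size at most $p$, yielding the bound $a \leq p$ in the conclusion (and $a \leq p-1 < p$ whenever $a$ is even, since $p$ is odd).

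Next, using the standard structure theory of unipotent elements in symplectic and orthogonal groups in good characteristic, I would decompose $W$ as an $F$-stable, $u$-stable orthogonal direct sum $W = W_0 \perp W_1 \perp \cdots \perp W_s$, where $u$ acts trivially on $W_0$ and each $W_i$ for $i \geq 1$ is a non-degenerate summand of one of the following forms: in the $\mathrm{Sp}$ case, either a single Jordan block $J_a$ with $a$ even, or a hyperbolic pair $J_a \oplus J_a$ with $a$ odd; in the $\mathrm{SO}$ case, either a single Jordan block $J_a$ with $a$ odd, or a hyperbolic pair $J_a \oplus J_a$ with $a$ even. The $F$-stability can be arranged by choosing the decomposition canonically from the $k[u]$-module structure of $W$. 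This yields a commuting product of $F$-invariant semisimple classical subgroups $Y_1, \ldots, Y_s$ of $G$ (the symplectic or orthogonal groups on the $W_i$), with $u = u_1 \cdots u_s$ and each $u_i \in Y_i$ non-trivial.

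If $s \geq 2$, I would split this product as $Y' = Y_1$ and $Y'' = Y_2 \cdots Y_s$: both are non-trivial $F$-invariant semisimple closed subgroups of $G$, they centralize one another, and $u$ lies in neither (since in each case it has a non-trivial component in the other). Proposition~\ref{nonsimple} then produces two non-conjugate $A_1$-subgroups of $G$ containing $u$, contradicting the hypothesis. Hence $s \leq 1$, and because $u \neq 1$ we in fact have $s = 1$. Combined with the block-size bound from order $p$, the resulting Jordan structure is precisely case (a) or (b).

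The main step to make rigorous is the $F$-stable orthogonal decomposition of $W$ --- specifically that each non-trivial piece supports a natural non-degenerate form and produces a genuine semisimple classical factor of $G$ centralizing the others; this is a standard consequence of the canonicity of the Jordan block decomposition for unipotent elements of classical groups in good characteristic.
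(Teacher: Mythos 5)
Your proof is correct, and it runs on the same engine as the paper's --- Proposition~\ref{nonsimple} applied to commuting semisimple classical factors arising from an orthogonal splitting of $W$ --- but you drive the splitting differently. The paper decomposes $W$ as a module for an $A_1$-overgroup $X$ of $u$: it records that $W\downarrow X$ is an orthogonal sum of irreducible summands and summands of the form $M\oplus M^*$, so that $X$ lies in a product ${\rm Sp}_{2r_1}(k)\times\cdots\times {\rm Sp}_{2r_t}(k)$, and then invokes Proposition~\ref{nonsimple} to confine the non-trivial blocks of $u$ to a single factor. You instead decompose $W$ under $u$ itself, using the standard orthogonal normal form of a unipotent element in odd characteristic (single even blocks and hyperbolic pairs of odd blocks in ${\rm Sp}$, dually in ${\rm SO}$), and apply Proposition~\ref{nonsimple} to the resulting factors, grouped as $Y_1$ versus $Y_2\cdots Y_s$. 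Your route is in fact slightly more complete as a necessity argument: confinement of $u$ to a single irreducible $X$-summand does not by itself yield the stated block shapes, since an irreducible symplectic $X$-module with several tensor factors, e.g.\ $L(1)\otimes L(2)^F$, carries several non-trivial Jordan blocks (here $(J_4+J_2)$), and excluding such configurations still requires splitting $W$ under $u$, exactly as you do; the paper leaves that step implicit. What the paper's formulation buys is the explicit description of $W\downarrow X$, which is then reused in the subsequent propositions when proving uniqueness for the surviving block shapes. Two small points to tighten in your write-up: first, $F$-stability does not follow from canonicity alone (a decomposition canonically attached to $u$ is $F$-stable only if $u$ is $F$-fixed); instead, replace $u$ by an $F$-fixed representative of its class, which is legitimate because both the hypothesis and the conclusion are invariant under $G$-conjugacy, and then take the standard decomposition defined over $\mathbb{F}_p$. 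Second, your opening remark about vacuity mirrors an imprecision in the lemma itself: for $u$ of order greater than $p$ the hypothesis holds vacuously while the conclusion can fail, so, like the paper, you are implicitly reading the lemma as concerning elements lying in at least one $A_1$-subgroup (equivalently, of order $p$, by \cite{TestA1}), which is how it is applied in the two propositions that follow.
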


In the following two propositions we establish Theorem \ref{t:mainclassical} for $G$ of type $B_n, C_n$ and $D_n$.

   \begin{prop} Let $G = {\rm Sp}_{2m}(k) = {\rm Sp}(W)$, $m\geq 2$, $W$ defined over $k$ of characteristic $p>2$, and let $u\in G$ be unipotent of order 
$p$. Then  any two $A_1$-subgroups of $G$ containing $u$ are $G$-conjugate
    if and only if the Jordan block decomposition of $u$ acting on $W$ is of the form $(J_{a}+J_1^r)$, for $a$ even, $a < p$, or
    $(J_a^2+J_1^r)$,
    for $a$ odd, $a<p$. 

\end{prop}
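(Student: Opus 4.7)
The plan is to combine Lemma~\ref{classical_reduction} with explicit module constructions and a uniqueness argument based on complete reducibility. Lemma~\ref{classical_reduction}(a) handles most of the ``only if'' direction, reducing the Jordan type of $u$ to $(J_a+J_1^r)$ with $a$ even and $a<p$, or $(J_a^2+J_1^r)$ with $a$ odd and $a\leq p$; it thus remains to exclude the single case $a=p$ of the latter. The ``if'' direction is then handled by showing that, for the listed Jordan types, any $A_1$-subgroup containing $u$ produces the same symplectic ${\rm SL}_2(k)$-module structure on $W$ up to Frobenius twist.

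For the exclusion of $a=p$, I would exhibit two non-conjugate $A_1$-subgroups of $G$ containing a unipotent element with Jordan blocks $(J_p^2+J_1^r)$. The first arises from the ${\rm SL}_2(k)$-module $L(p-1)\oplus L(p-1)$: since $p-1$ is even, $L(p-1)$ is orthogonally self-dual, so the direct sum of two copies carries a non-degenerate symplectic form via the hyperbolic pairing on a dual pair. Adjoining a hyperbolic trivial summand of dimension $r$ yields an embedding ${\rm SL}_2(k)\to G$ whose image is an $A_1$-subgroup containing $u$. The second comes from the indecomposable tilting module $T(p)$, which by the preliminaries is $2p$-dimensional with two Jordan blocks of size $p$ under the action of the non-identity unipotents, and which is symplectically self-dual since $p$ is odd; again one adjoins an $r$-dimensional hyperbolic trivial summand. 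These two $A_1$-subgroups are not $G$-conjugate because $W\downarrow X$ is semisimple in the first construction but not in the second.

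For the ``if'' direction, let $X$ be any $A_1$-subgroup of $G$ containing $u$. Since $u$ has order $p$, \cite[Corollary 5.8]{K2} implies that $X$ acts completely reducibly on $W$, so that $W\downarrow X$ is a direct sum of irreducibles $L(c_i)^{F^{j_i}}$. Using Steinberg's tensor product theorem together with Lemmas~\ref{twofold_tensors} and~\ref{threefold_tensors}, the hypothesis $a<p$ rules out any ``composite'' irreducible summand (whose $p$-adic expansion has more than one non-zero digit), since such a summand would contribute either at least three non-trivial Jordan blocks, or two of distinct sizes, or the exotic $J_p^2$ decomposition from $J_2\otimes J_p$ available only at $a=p$. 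The summands must therefore each have the form $L(c)^{F^j}$ with $c\leq p-1$, contributing a single Jordan block $J_{c+1}$. The symplectic constraint $X\subseteq {\rm Sp}(W)$ then forces each non-trivial summand either to be self-dual symplectic (when $c$ is odd) or to occur in a hyperbolic dual pair (when $c$ is even), and this parity dichotomy is the heart of the argument. For $(J_a+J_1^r)$ with $a$ even, the single non-trivial summand must be $L(a-1)$ with $a-1$ odd, self-dual symplectic; for $(J_a^2+J_1^r)$ with $a$ odd, the two non-trivial Jordan blocks must come from a single hyperbolic pair $L(a-1)\oplus L(a-1)$, since $a-1$ is even and $L(a-1)$ admits no invariant symplectic form. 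In either case the symplectic $kX$-module $W$ is determined up to isometry, and so $X$ is unique up to $G$-conjugacy. The most delicate step is exactly this combinatorial elimination of composite summands, which goes through precisely because $a<p$ excludes the $J_2\otimes J_p$ phenomenon exploited in the ``only if'' construction.
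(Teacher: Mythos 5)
Your architecture matches the paper's proof closely: the reduction via Lemma~\ref{classical_reduction}, complete reducibility from \cite[Corollary 5.8]{K2}, uniqueness via Lemmas~\ref{twofold_tensors} and~\ref{threefold_tensors} plus the parity of invariant forms, and two explicit constructions at $(J_p^2+J_1^r)$. The one genuinely different ingredient is your second construction at $a=p$: the paper uses the irreducible module $(L(1)\otimes L(p-1)^F)\oplus N$, whereas you use $T(p)\oplus N$. This works, and your non-conjugacy criterion (semisimple versus non-semisimple restriction) is arguably cleaner than comparing summand dimensions; but ``symplectically self-dual since $p$ is odd'' is not a proof, so justify it, e.g.\ $L(1)\otimes L(p-1)$ is tilting with highest weight $p$ and dimension $2p=\dim T(p)$, whence $T(p)\cong L(1)\otimes L(p-1)$ carries the (symplectic)$\otimes$(orthogonal) form. (Your uniform treatment of the regular case $(J_{2m})$ through complete reducibility, rather than quoting Proposition~\ref{p:regular} as the paper does, is fine since all blocks have size $<p$.)

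There is, however, a genuine gap in your uniqueness step, and it deserves attention because the paper's own proof passes over the same point. Your trichotomy for what a composite summand can contribute --- at least three non-trivial blocks, two non-trivial blocks of distinct sizes, or $J_p^2$ --- omits the fourth alternative recorded in Lemma~\ref{twofold_tensors}: $J(2)\otimes J(2)\cong J(3)\oplus J(1)$, i.e.\ a summand $L(1)\otimes L(1)^{F^j}$ contributes exactly \emph{one} non-trivial block, of size $3$. For $a\neq 3$ this case dies by size comparison and your argument is complete. But at $a=3$ your assertion that ``the two non-trivial Jordan blocks must come from a single hyperbolic pair $L(a-1)\oplus L(a-1)$'' is unjustified: $M=L(1)\otimes L(1)^{F}$ is orthogonally self-dual, so $M\oplus M$ carries an invariant non-degenerate alternating (hyperbolic) form, and $M^{\oplus 2}\oplus N$, with $N$ trivial of dimension $r-2$ (recall $r$ must be even here), yields a homomorphism ${\rm SL}_2(k)\to {\rm Sp}(W)$ whose image meets the class $(J_3^2+J_1^r)$ whenever $r\geq 2$; its image is not conjugate to the subgroup coming from $L(2)\oplus L(2)\oplus N'$, since the irreducible summand dimensions $\{4,4,1,\dots\}$ and $\{3,3,1,\dots\}$ differ under any twist or automorphism. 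So at $a=3$, $r\geq 2$ the uniqueness claim is not merely unproved by your argument --- this construction appears to contradict it, by exactly the mechanism that forces the exclusions $\ell=3$, $r>0$ in type $A$ (Proposition~\ref{prop:SLn}) and $a=3$, $r>0$ in types $B$ and $D$. The paper's proof (``given that $a$ is odd, \dots\ the only representation is $L(a-1)\oplus L(a-1)^*+N$'') contains the same omission; it is easy to miss in ${\rm Sp}$ because a single copy of $L(1)\otimes L(1)^F$ cannot occur (its invariant form is symmetric), and one needs the doubled, hyperbolically embedded copy. To repair your write-up, add the missing case to the trichotomy and eliminate it by parity for $a\neq 3$; for $a=3$ either an argument excluding the doubled summand must be supplied, which I do not see, or the statement itself needs an exclusion there.
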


\begin{proof} We treat the possible Jordan block decompositions for $u$ given in Lemma~\ref{classical_reduction}(a).

  If $u$ acts as $(J_{2m})$, then $u$ is regular in $G$ and the result follows by Proposition \ref{p:regular}.

  Suppose now that $u$ acts on $W$ as $(J_{a}+J_1^r)$, with $a$ even, $a<p$, and $r>0$.  Then by \cite[Corollary 5.8]{K2}, any $A_1$-subgroup of $G$ containing $u$
  acts completely reducibly on
$W$. Now we apply Lemmas \ref{twofold_tensors} and \ref{threefold_tensors} and  see that there is a unique representation $\rho:{\rm SL}_2(k)\to G$ (up to twists and equivalence)
 with $u\in {\rm im}(\rho)$.

 Suppose now that $u$ acts on $W$ with Jordan blocks $(J_a^2+J_1^r)$, $a$ odd, $a\leq p$. If $a<p$, we  argue as in the previous paragraph;
 that is, given that $a$ is
odd, up to equivalence
and
 Frobenius twists, the only representation $\rho:{\rm SL}_2(k)\to G$ with $u\in{\rm im}(\rho)$ is given by the action of ${\rm SL}_2(k)$ on 
 $L(a-1)\oplus L(a-1)^*+N$, where $N$ is an $r$-dimensional trivial module.

 Finally, consider the case where $u$ acts on $W$ with Jordan block decomposition $(J_p^2+J_1^r)$. Here, we exhibit two non-conjugate $A_1$-subgroups of $G$
 each
 containing $u$. One such can be obtained as in the previous paragraph via the action of ${\rm SL}_2(k)$ on the module  $L(p-1)\oplus L(p-1)^*+N$, $N$ an
 $r$-dimensional
 trivial module. The second action is that of ${\rm SL}_2(k)$ on the module $(L(1)\otimes L(p-1)^F)\oplus N$ (see Lemma~\ref{twofold_tensors}).\end{proof}

\begin{prop} Let $G = {\rm SO}(W)$ where $W$ is a $2m$ or $(2m+1)$-dimensional $k$-vector space, $\dim W\geq 7$, defined over 
  $k$ of characteristic $p>2$, and let $u\in G$ be unipotent of order $ p$. Then any two $A_1$-subgroups of $G$ are $G$-conjugate
 if and only if the Jordan block decomposition of $u$ acting on $W$ is of the form $(J_a+J_1^r)$,  for $a$ odd, $a\leq p$ and
  $a\ne 3$ if $r>0$, or
    $(J_a^2+J_1^r)$,
    for $a$ even, $a<p$. 

\end{prop}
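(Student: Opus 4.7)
The plan is to adapt the approach of the preceding symplectic-case proposition. First, by Lemma~\ref{classical_reduction}(b), I restrict attention to $u$ whose Jordan block decomposition on $W$ is either $(J_a+J_1^r)$ with $a$ odd, $a\leq p$, or $(J_a^2+J_1^r)$ with $a$ even, $a<p$. When $u$ is regular in $G$---that is, $r=0$ with $G$ of type $B_n$, or $(a,r)=(2n-1,1)$ with $G$ of type $D_n$---Proposition~\ref{p:regular} applies directly.

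Next, for $(J_3+J_1^r)$ with $r>0$, I will exhibit two non-conjugate $A_1$-subgroups of $G$ containing $u$. The first comes from the action of $\mathrm{SL}_2(k)$ on $L(2)\oplus N$, where $N$ is trivial of dimension $r$; since $L(2)$ has even highest weight it carries a non-degenerate invariant quadratic form, placing the image in $\mathrm{SO}(W)$. The second comes from the action on $(L(1)\otimes L(1)^F)\oplus N'$, with $N'$ trivial of dimension $r-1$: this module is simple and isomorphic to $L(1+p)$ by Steinberg's tensor product theorem, carries an invariant symmetric form as the tensor of two symplectic forms, and by Lemma~\ref{twofold_tensors} the element $u$ acts on it with blocks $J_3+J_1$. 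The two subgroups are non-conjugate because the composition factors $L(2)$ and $L(1+p)$ remain inequivalent under any Frobenius twist.

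For the remaining subcases I will invoke \cite[Corollary 5.8]{K2} so that any $A_1$-subgroup $X\subseteq G$ containing $u$ acts completely reducibly on $W$, and then use Steinberg's tensor product theorem to express each simple summand of $W\downarrow X$ as a tensor $\bigotimes_j L(c_j)^{F^j}$. By Lemmas~\ref{twofold_tensors} and~\ref{threefold_tensors}, any such summand with more than one non-trivial Steinberg factor contributes either $J_3+J_1$, $J_p+J_p$, or at least three non-trivial Jordan blocks to the action of $u$; under the hypotheses $a\neq 3$ in the first shape and $a$ even with $a<p$ in the second, none of these contributions is consistent with the Jordan decomposition of $u$. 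Hence every simple summand is of the form $L(c)^{F^j}$ with $c\leq p-1$, and matching Jordan blocks forces $W\downarrow X$ to be $L(a-1)\oplus N$ in the first case and $L(a-1)\oplus L(a-1)^*\oplus N$ in the second.

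The main remaining obstacle will be the form analysis for the second shape: using the self-duality of $L(a-1)$ and the vanishing of $\mathrm{Hom}_X$ between inequivalent Frobenius twists, I must show that placing the two copies of $L(a-1)$ at distinct Frobenius twists produces a symplectic rather than an orthogonal invariant form on $W$, ruling out such configurations inside $\mathrm{SO}(W)$. Once this is done, the remaining common-twist ambiguity is absorbed by a global Frobenius twist of $X$ (which does not change its $G$-conjugacy class), giving a unique representation up to equivalence and hence a unique $G$-conjugacy class of $A_1$-subgroups.
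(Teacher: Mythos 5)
Your overall strategy matches the paper's: reduce via Lemma~\ref{classical_reduction}(b), dispose of regular elements by Proposition~\ref{p:regular}, exhibit the two modules $L(2)\oplus N$ and $(L(1)\otimes L(1)^F)\oplus N'$ to kill uniqueness for $(J_3+J_1^r)$ with $r>0$, and use complete reducibility plus Lemmas~\ref{twofold_tensors} and~\ref{threefold_tensors} to sieve the remaining cases. The ``obstacle'' you flag about two copies of the symplectic module $L(a-1)$ at distinct Frobenius twists is real but resolvable exactly as you sketch (a simple symplectic summand of multiplicity one inside an orthogonal space would have to carry an invariant symmetric form, which it does not), and the paper absorbs this point into its general discussion, preceding Lemma~\ref{classical_reduction}, of how ${\rm SL}_2(k)$-modules decompose orthogonally.

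However, there is a genuine gap: you invoke \cite[Corollary 5.8]{K2} for \emph{all} remaining subcases, but the first shape includes $a=p$ (the statement allows $a$ odd, $a\leq p$), i.e.\ Jordan type $(J_p+J_1^r)$ with $r>1$, and that corollary does not apply when $u$ acts with a block of size $p$ --- the paper explicitly notes this. The failure is not cosmetic: non-semisimple indecomposables exist whose unipotent Jordan type is consistent with your target, e.g.\ the Weyl module $W(p)$, on which $u$ acts as $J_p+J_1$, so a configuration such as $W(p)\oplus(\text{trivial})$ survives your sieve and cannot be excluded by the tensor-product lemmas, which only see semisimple summands. The paper closes this case by a separate duality argument: writing $W\downarrow X$ as a sum of indecomposables, any reducible indecomposable summand carries the unique non-trivial Jordan block, is not self-dual by \cite[Proposition 5.7]{K2}, and hence its dual must also occur as a summand --- producing a second block $J_p$, contradicting the Jordan type $(J_p+J_1^r)$. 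Complete reducibility then holds after all and the argument proceeds as for $a\notin\{3,p\}$. You need to add this step (or some substitute) for your proof to cover $(J_p+J_1^r)$, $r>1$. A minor further point: your list of possible contributions from a decomposable summand omits the option from Lemma~\ref{twofold_tensors} of two non-trivial blocks of \emph{distinct} dimensions (e.g.\ $J(2)\otimes J(n)=J(n+1)\oplus J(n-1)$ for $2<n<p$); this is harmless, since such a contribution is also incompatible with both target shapes, but the case should be named for the sieve to be complete.
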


\begin{proof} We treat the  possible Jordan block decompositions for $u$ given in Lemma~\ref{classical_reduction}(b).

If $u$ acts as $(J_{2m-1}+J_1)$ or as $(J_{2m+1})$ on $W$, then $u$ is regular in $G$ and the result follows by Proposition \ref{p:regular}.

Let $\text{dim}(W)= r+4$, $r\geq 0$. We now consider the representation $\rho: {\rm SL}_2(k) \to G$ given by the action of ${\rm SL}_2(k)$ on the module $(L(1)\otimes L(1)^F)+N$, for $N$ an
$r$-dimensional
trivial module. The Jordan block decomposition of a non-identity unipotent element in ${\rm SL}_2(k)$ on such a module is $(J_3+J_1^{r+1})$. In addition, 
the 
representation $\rho': {\rm SL}_2(k) \to G$ afforded by the module $L(2)+N'$, for an $(r+1)$-dimensional trivial module $N'$ also has image meeting the class of 
elements with Jordan block decomposition $(J_3+J_1^{r+1})$. Thus, there exist non-conjugate $A_1$-subgroups containing $u$.

Suppose now that $u$ acts on $W$ as $(J_a+J_1^r)$, with $a$ odd, $a\leq p$, $a\ne 3$,  and $r>0$.
If $a<p$, then by \cite[Corollary 5.8]{K2}, any $A_1$-subgroup of $G$ containing $u$ acts completely reducibly on $W$ and  Lemmas ~\ref{twofold_tensors} 
and ~\ref{threefold_tensors} show that
up to twists and equivalence, there is a unique representation $\rho:{\rm SL}_2(k)\to G$ with $u\in {\rm im}(\rho)$, namely given by the action of
 ${\rm SL}_2(k)$ on 
$L(a-1)+N$, for $N$ an $r$-dimensional trivial module. Now if $a=p$, so that $u$ acts as $(J_p+J_1^r)$, for $r>1$, we may no longer apply 
\cite[Corollary 5.8]{K2}. (The case $r\leq 1$ was covered in the first paragraph of the proof.) Let $X$ be 
an $A_1$-subgroup of $G$ containing $u$. Write $W$ as a sum of indecomposable $X$-modules. On any reducible indecomposable summand, $u$ has a unique 
non-trivial Jordan block; 
then \cite[Prop. 5.7]{K2} implies that such a summand is not self-dual and so the dual of the summand must also occur as a summand and this is not 
compatible with the Jordan 
block structure of $u$. Hence once again, $X$ acts completely reducibly on $W$ and we may proceed as for the case $a\not\in\{p,3\}$ to see that 
$X$ is unique up to 
conjugacy in $G$. 

Suppose now that $u$ acts on $W$ as $(J_{a}^2+J_1^r)$, with $a$ even, $a<p$.  Then by \cite[Corollary 5.8]{K2}, any $A_1$-subgroup of $G$ 
containing $u$ acts completely reducibly on
$W$, and  applying Lemmas~\ref{twofold_tensors} and \ref{threefold_tensors} we 
find that there is a unique (up to twists and equivalence) representation  $\rho:{\rm SL}_2(k)\to G$ with $u\in {\rm im}(\rho)$, 
namely arising from the action of
${\rm SL}_2(k)$ on $L(a-1)\oplus L(a-1)^* + N$, where $N$ is an $r$-dimensional trivial module.\end{proof}

\section{Exceptional Groups}\label{sec:exc}

In this section we will treat the simple groups of exceptional type, proving Theorem~\ref{t:mainexceptional}. We first recall some facts about the
notation we will use for identifying unipotent conjugacy classes in $G$. Since  $p$ is a good prime for $G$, we will use the parametrisation of unipotent
conjugacy classes provided by  Bala--Carter--Pommerening in \cite{BCI,BCII,PomI, PomII}. There is a bijection between $G$-conjugacy classes of unipotent
elements
and $G$-classes of pairs $(L,P_L)$, where $L$ is a Levi subgroup of $G$ and $P_L$ is a distinguished parabolic subgroup of $[L,L]$. The $G$-class
corresponding to a
    pair $(L,P_L)$ contains the dense orbit of $P_L$ acting on its unipotent radical, which meets a class of so-called ``distinguished" unipotent elements.
  We will only be concerned with unipotent classes in the exceptional groups of type $E_n$. (The results of \cite{LaTeA1} will allow us to reduce to $G$ of type $E_n$. See Propositions~\ref{p:largeprimes} and ~\ref{reduction}.) More specifically those which
    have Bala--Carter--Pommerening ``label'' $\Psi$, $(\Psi)'$, $(\Psi)''$,  or $\Psi(a_i)$, where one of the following holds:
    \begin{enumerate}[i.]
    \item $\Psi$ is a root system and the associated class is the regular element in  a Levi factor of type $\Psi$ in $G$ and all such Levi factors
      are conjugate;
      \item There are non-conjugate Levi factors of root system type $\Psi$ and the classes labelled $(\Psi)'$ and $(\Psi)''$ correspond to regular
        elements in a
        representative  of each of the two  classes; or
      \item $\Psi$ is of type $E_m$ or $D_m$ and the class is labelled by $\Psi(a_i)$ for some $i$, which corresponds to a specific class of distinguished
        unipotent elements in a Levi factor of type $\Psi$ in $G$.
\end{enumerate} 

Now in addition to the above discussion, we  remind the reader that the class labelling is consistent with taking $L$-classes in Levi factors
$L$; that is,
if a class in $L$ has label $\Psi$ or $\Psi(a_i)$, then the class in $G$ has the same label, with two exceptions (in our setting): \begin{itemize}
  \item There is a unique class
    of regular elements in each Levi factor, and there are non-conjugate Levi factors of this same root system type. Then we require two labellings,
    for example in
    $G = E_7$, we have two classes labelled $(A_5)'$ and $(A_5)''$ for the non-conjugate Levi factors of type $A_5$ and the two $G$-classes meeting the
    class of regular
elements in a representative of each class of Levi factors of type $A_5$. (See \cite[p.272]{Law_unimax} for a more detailed explanation.)
\item In the groups of type $D_m$, we must distinguish the Levi factors of type $D_3$ from
  those of type $A_3$ (even though the two root systems are isomorphic), while in $G$ of type $E_n$, we view all Levi factors with root system $A_3$ as
  being of type $A_3$.
  \end{itemize}

  This discussion is particularly relevant with regards to our use of the tables in \cite{Law_unimax}. In this paper, Lawther considers the fusion of
  unipotent classes
  of  maximal connected reductive subgroups of the exceptional groups. The results are presented in the form of tables whose rows correspond to the
  classes in
  the maximal subgroup and whose columns give the labelling of the $G$-class. When a class is not mentioned in the table, we are meant to understand
  that the labelling
in the ambient group is the same as in the maximal subgroup. So for example, for the maximal subgroup $D_8$ in $G = E_8$, covered in
\cite[\S4.13, Table 22]{Law_unimax}, the $D_8$-class labelled $A_6$ does not appear in the first column, so we are to understand that the elements in
this class
lie in the $E_8$-class labelled $A_6$.

Let $N(A_1,G)$ be as defined in \cite[p.2]{LiSe96}. For $p>N(A_1,G)$ Theorem \ref{t:mainexceptional} can be established using the results of \cite{LaTeA1}; we record this in the following proposition.

\begin{prop} \label{p:largeprimes}
Let $G$ be an exceptional type simple algebraic group defined over a field of good characteristic $p>N(A_1,G)$ and let
  $u \in G$ be unipotent of order $p$. Then  any two $A_1$-subgroups of $G$ containing $u$ are $G$-conjugate if and only if $\class(u)$ is one of the following:
  \begin{enumerate}[(a)]
  \item $G = G_2$ and $\class(u)\in\{A_1, \tilde{A_1}, G_2\}$;
  \item $G = F_4$ and $\class(u)\in\{A_1, \tilde{A_2}, B_2, B_3, C_3, F_4(a_1), F_4\}$;
  \item $G = E_6$ and $\class(u)\in\{A_1, A_3, D_4, A_5, D_5, E_6(a_1), E_6\}$;
  \item $G = E_7$ and $\class(u)\in\{A_1, A_3, D_4, (A_5)'', (A_5)', D_5,A_6, D_6, E_6(a_1), E_6, E_7(a_1), E_7\}$;
  \item $G = E_8$ and $\class(u)\in\{A_1, A_3, D_4, A_5, D_5, E_6(a_1), D_6, E_6, A_7, D_7, E_7(a_1), E_7, E_8(a_4),$

    $ E_8(a_2), E_8(a_1), E_8\}$.
\end{enumerate}
\end{prop}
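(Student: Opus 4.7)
The plan is to apply directly the classification of $A_1$-subgroups of the exceptional simple algebraic groups in good characteristic $p > N(A_1,G)$ due to Liebeck--Seitz in \cite{LaTeA1}. For each exceptional $G$ and each $p$ in this range, their tables determine, up to $G$-conjugacy, all closed connected subgroups of type $A_1$, recording in particular the composition factors of each such subgroup on the adjoint module and the minimal module. From the weights on $\mathrm{Lie}(G)$ one reads off the Jordan block decomposition of a non-identity unipotent element of the $A_1$-subgroup acting on $\mathrm{Lie}(G)$, and hence, via the weighted Dynkin diagram, the Bala--Carter--Pommerening class of that element in $G$.

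First I would observe that the hypothesis $p > N(A_1,G)$ forces $p \geq h(G)$ (indeed $p$ is strictly larger than the adjoint exponent on any principal $A_1$-subgroup), so every non-identity unipotent element in $G$ has order exactly $p$; consequently the order-$p$ hypothesis on $u$ is automatic. The property in the statement then becomes: \emph{exactly one} $G$-conjugacy class of $A_1$-subgroups meets $\class(u)$. The bulk of the proof therefore reduces to scanning the Liebeck--Seitz tables case by case for each of $G_2, F_4, E_6, E_7, E_8$ and extracting the unipotent classes met by a unique class of $A_1$-subgroups.

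Before doing the inspection, I would dispose of the entries that follow without appeal to \cite{LaTeA1}: the long root class $A_1$ is handled by Proposition~\ref{p:longroot}, and each regular entry ($G_2$, $F_4$, $E_6$, $E_7$, $E_8$) by Proposition~\ref{p:regular}. The classes $A_3$ and $D_4$ are the regular classes of the unique (up to $G$-conjugacy) Levi factors of these root-system types in $G$, and the uniqueness statement inside the Levi factor — combined with the fact that Levi factors of a given type are $G$-conjugate — shows that these classes too meet a unique $G$-class of $A_1$-subgroups. The remaining entries (the various $\Psi(a_i)$, and the non-regular subregular-type cases $\tilde{A_1}, \tilde{A_2}, B_2, B_3, C_3, D_5, A_5, A_6, D_6, A_7, D_7$, and so on) are verified by direct consultation of the tables of \cite{LaTeA1}: for every such class appearing in the statement, exactly one $G$-class of $A_1$-subgroups projects onto it, and for every class omitted from the statement (but meeting some $A_1$-subgroup) there are at least two non-conjugate $A_1$-classes doing so.

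The main obstacle is organisational rather than conceptual: one has to translate between the identification of $A_1$-classes in \cite{LaTeA1}, which is given in terms of highest weights of rational irreducible $A_1$-modules appearing as composition factors of $\mathrm{Lie}(G)$, and the Bala--Carter--Pommerening labels used in Table~\ref{Tablemain} and in the statement. Since \cite{LaTeA1} records both the unipotent class of a non-identity element of each $A_1$-subgroup and its adjoint-module restriction, this translation is mechanical, and the proposition follows by verification.
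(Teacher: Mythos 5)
Your core strategy --- reading the result off the tables of \cite{LaTeA1} --- is exactly the paper's proof, which consists of citing Theorems 1 and 4 and Tables 1--5 of \cite{LaTeA1} (note: that reference is Lawther--Testerman, not Liebeck--Seitz; the constant $N(A_1,G)$ comes from Liebeck--Seitz \cite{LiSe96}). However, two of your supporting claims are genuinely wrong, and one of them undermines the logical setup of your case analysis.

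First, the assertion that $p>N(A_1,G)$ forces $p\geq h(G)$, so that the order-$p$ hypothesis is automatic, is false. For example $N(A_1,E_8)=7$ while $h(E_8)=30$, so the hypothesis of the proposition admits $p=11$, for which the regular unipotent class of $E_8$ consists of elements of order $p^2$; similarly for $E_7$ with $p\in\{11,13,17\}$, and so on. This is not a harmless aside: a unipotent element of order greater than $p$ lies in \emph{no} $A_1$-subgroup at all (non-identity unipotent elements of $\mathrm{(P)SL}_2(k)$ have order $p$), so for such elements the unicity property holds vacuously, and if you drop the order-$p$ hypothesis the ``only if'' direction of the proposition becomes false for every class omitted from the list whose elements have order exceeding $p$. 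With the hypothesis correctly in place, the relevant dichotomy is the one you want --- by \cite{TestA1} every order-$p$ unipotent element lies in at least one $A_1$-subgroup, so the question is genuinely ``exactly one class versus at least two'' --- but your reduction as written does not establish this, and your scanning criterion (``every omitted class meeting some $A_1$-subgroup meets at least two classes'') silently depends on it.

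Second, your shortcut for the classes $A_3$ and $D_4$ is a non sequitur. Unicity of the $A_1$-overgroup of a regular element \emph{inside} a Levi factor of type $A_3$ or $D_4$, together with $G$-conjugacy of such Levi factors, does not preclude a different $A_1$-subgroup of $G$ containing $u$ that does not lie in any Levi factor of that type: a priori such an $X$ could be non-$G$-cr, $G$-irreducible, or $L$-irreducible in a Levi factor of a larger type (e.g.\ an $A_1$ embedded $L$-irreducibly in a $D_5$- or $D_6$-Levi could conceivably meet the $A_3$-class; ruling this out is precisely the content of the paper's Propositions~\ref{prop:G-irr} and~\ref{p:Greducible} in the small-prime regime, via Lemmas~\ref{lemma:An} and~\ref{lemma:Dn}). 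For $p>N(A_1,G)$ the non-$G$-cr possibility is excluded by \cite{LiSe96}, but the remaining possibilities still require verification --- which is exactly what the tables of \cite{LaTeA1} provide. The gap is repairable at no cost: simply include $A_3$ and $D_4$ in the same table scan you perform for the other classes, rather than treating them by the Levi argument; your appeals to Propositions~\ref{p:longroot} and~\ref{p:regular} for the long-root and regular classes, by contrast, are sound and match the structure of the paper's argument elsewhere.
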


\begin{proof}
For $p>N(A_1,G)$, Theorems 1 and 4 of \cite{LaTeA1} determine the classes of
$A_1$-subgroups containing a representative of each unipotent conjugacy class. More precisely, an $A_1$-subgroup is determined up to $G$-conjugacy by its  (uniquely determined) non-negatively labelled diagram.
Under this restriction on $p$, we then use Tables 1 - 5 of \cite{LaTeA1} to deduce the result.
\end{proof}

Henceforth, we will assume that $p\leq N(A_1,G)$, while still assuming that $p$ is good for $G$. Thus we must consider the pairs $(G,p)\in\{(E_6, 5),
(E_7, 5),
(E_7,7), (E_8,7)\}$.

\begin{prop}\label{reduction} Let $(G,p)\in\{(E_6, 5),(E_7, 5),
(E_7,7), (E_8,7)\}$ and let
  $u \in G$ be unipotent of order $p$. Suppose that any two $A_1$-subgroups of $G$ containing $u$ are $G$-conjugate. Then $\class(u)$ is one of the following:
  \begin{enumerate}[(a)]
  \item $(G,p) =  (E_6,5)$ and $\class(u)\in\{A_1, A_3 \}$;
  \item $(G,p) = (E_7,5)$ and $\class(u)\in\{A_1, A_3 \}$;
  \item $(G,p) = (E_7,7)$ and $\class(u)\in\{A_1, A_3, D_4, (A_5)'', (A_5)', A_6 \}$;
  \item $(G,p) = (E_8,7)$ and $\class(u)\in\{A_1, A_3, D_4, A_5\}$.
    \end{enumerate}
  \end{prop}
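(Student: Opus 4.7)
The proof is by contrapositive: for each of the four pairs $(G,p)$ and each unipotent class containing an element of order $p$ that is \emph{not} in the stated list, I will exhibit two non-conjugate $A_1$-subgroups of $G$ containing a representative. The principal tool is Proposition~\ref{nonsimple}, applied inside a suitable subsystem subgroup of $G$, with the required class-fusion data drawn from \cite{Law_unimax}.

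The first step is to determine, for each pair $(G,p)$, which $G$-conjugacy classes of unipotent elements contain an element of order exactly $p$. Since the order of a unipotent $u$ equals $p^{\lceil\log_p(s+1)\rceil}$, where $s$ is the largest Jordan block of $u$ on a faithful module (for example the adjoint module or a minimal module), the standard block-structure tables identify the admissible classes. Most of the classes appearing in Proposition~\ref{p:largeprimes} are thereby excluded: e.g.\ $D_4$ (largest block $7$), $A_5$ (largest block $6$), $D_5$ (largest block $9$), $A_6, D_6, A_7, D_7$ and the various $\Psi(a_i)$ classes of sufficiently high nilpotency are all ruled out as soon as $p$ is below the corresponding threshold. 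This alone explains why each reduction list is a proper subset of the corresponding large-prime list from Proposition~\ref{p:largeprimes}.

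Next, for each remaining order-$p$ class that is not in the reduction list, I exhibit non-conjugate $A_1$-subgroups containing a representative $u$. When the Bala--Carter label has the form $\Psi_1+\cdots+\Psi_t$ with $t\geq 2$, the class meets a Levi subgroup $L\subseteq G$ of that type, and $u$ decomposes as a commuting product $u_1\cdots u_t$ of non-trivial unipotents, one in each simple factor of $[L,L]$. Partitioning these factors into commuting subgroups $Y_1$ and $Y_2$ then supplies the hypothesis of Proposition~\ref{nonsimple}, which produces the required pair of non-conjugate $A_1$-subgroups. This handles every non-distinguished order-$p$ class outside the reduction list, in particular all ``sum'' classes such as $2A_1$, $A_2+A_1$, $2A_2+A_1$, $A_3+A_1$, $A_4+A_1$, and their analogues across the four pairs.

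Finally, for the few distinguished classes of order $p$ that remain, the element lies in no proper Levi and no Levi decomposition is available. For these I locate $u$ inside a maximal-rank subsystem subgroup $H=H_1H_2$ of $G$, not contained in any proper Levi, with $[H_1,H_2]=1$ and both factors non-trivial, using the fusion tables of \cite{Law_unimax}; Proposition~\ref{nonsimple} again applies. The main obstacle of the proof is the case-by-case verification in this final step: for each distinguished class one must pinpoint an appropriate maximal-rank subsystem and confirm from Lawther's tables that $u$ meets the dense orbit of a product of two non-trivial factors. Once this verification is completed class by class for each of the pairs $(E_6,5),(E_7,5),(E_7,7),(E_8,7)$, the reduction list is established.
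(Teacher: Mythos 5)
Your outline reproduces the paper's first reduction correctly: composite labels $\Psi_1+\cdots+\Psi_t$ are eliminated via Proposition~\ref{nonsimple} applied to the simple factors of a Levi, and the genuinely distinguished classes ($E_7(a_5)$ in $E_7$, $E_8(a_7)$ in $E_8$) are indeed handled by locating them in subsystem subgroups ($D_6A_1$, $A_4A_4$) via the fusion tables of \cite{Law_unimax}. But your case division has a hole exactly where all the remaining work lies. After the composite labels are removed, the order-$p$ classes still needing exclusion are those with a \emph{simple} Bala--Carter label, i.e.\ distinguished in a proper simple Levi but not distinguished in $G$: namely $A_2$, $A_4$, $D_4(a_1)$ for all four pairs, together with $D_5(a_1)$, $D_6(a_2)$, $A_6$, $E_6(a_3)$ (and $E_7(a_5)$ inside $E_8$) when $p=7$. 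These are not ``sum'' classes, so your second step does not touch them; and contrary to your third step, they do lie in proper Levi subgroups, so they are not covered by your ``distinguished, no proper Levi'' case either. Even reading your third step charitably as applying to all non-sum classes, ``maximal-rank subsystem subgroup of $G$'' is the wrong class of subgroups: the paper realises $D_m(a_j)$, $A_4$ and $A_6$ inside $B_aB_d$-subgroups of $D_m$-Levis, i.e.\ $\mathrm{SO}_{2a+1}\times \mathrm{SO}_{2d+1}\subseteq \mathrm{SO}_{2m}$, which are neither subsystem subgroups of $E_n$ nor of maximal rank, and the relevant fusion fact for $A_4$ comes from \cite[Theorem 3.1, Lemma 3.11]{LSbook}, not from \cite{Law_unimax}. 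Most seriously, the $A_2$-class is treated by a different idea altogether: the irreducible four-dimensional representations $L(1)\otimes L(1)^{F^a}$, $a\geq 1$, give pairwise non-conjugate $A_1$-subgroups of a fixed $A_3$-Levi, each containing a fixed element with Jordan blocks $(J_3+J_1)$, hence in the class $A_2$. Your proposal offers no mechanism at all for this class, and it is not of the form covered by Proposition~\ref{nonsimple} as you deploy it.

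There is also an error in your first step. A unipotent element whose largest Jordan block on a faithful module has size $s$ has order $p^{\lceil\log_p s\rceil}$, not $p^{\lceil\log_p (s+1)\rceil}$; the two differ precisely when $s$ is a power of $p$, and this matters here. For instance, at $p=5$ the class $D_4(a_1)$ acts on the natural module of a $D_4$-Levi with blocks $(J_5+J_3)$ and has order $5$, but your formula assigns it order $25$ and silently removes it from consideration; since $D_4(a_1)$ is not in list (a), your contrapositive would then be incomplete for that class. Similarly, the $A_6$-class of $E_7$ at $p=7$ acts on the $56$-dimensional module with blocks $(J_7)^8$ yet has order $7$ (it appears in list (c)); your formula would declare it of order $49$. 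The paper avoids this by reading the orders directly from \cite[Tables 5,7,9]{Law_jordan}. In short: the skeleton of your reduction agrees with the paper's, but the class-by-class constructions that constitute the actual proof are missing, the classes that need them are misfiled by your trichotomy, and the order criterion you use to generate the case list is off by exactly the boundary cases that occur.
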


  \begin{proof} 
    We rely upon \cite[Tables 5,7,9]{Law_jordan} to determine which unipotent elements have order $p$. In addition, by Proposition~\ref{nonsimple}, we may assume that $u$ is distinguished in a Levi factor $L$ with $[L,L]$ simple. We treat in turn the remaining
    cases which
    do not appear in the statement of the result.

    Consider first the case $G = E_6$ and $p=5$. Then $u$  distinguished in a Levi factor $L$ as described above, of order $5$ and $\class(u)$ not listed in (a) imply that $\class(u)\in\{A_2,A_4, D_4(a_1)\}$.
    If $\class(u) = A_2$, we note that the $4$-dimensional representations of ${\rm SL}_2(k)$ with highest weight $1+p^a$ for different values of
    $a\geq 1$, have non-conjugate
    images in $A_3$ and each contains a fixed element whose class in $A_3$ is the $A_2$-class. The $A_1$-subgroups of $G$ obtained by taking such
    embeddings in a
    fixed $A_3$-Levi subgroup of $G$ will give rise to non-conjugate $A_1$-subgroups of $G$ containing $u$ (seen by considering the action on ${\rm Lie}(G)$). 

    For $\class(u) = A_4$, we consider embeddings of $A_1$ in $D_5$ which factor through a $B_2B_2$-subgroup of $D_5$. Then
    \cite[Theorem 3.1, Lemma 3.11]{LSbook} show that this embedding meets the $A_4$-class of $D_5$, and by taking a $D_5$-parabolic subgroup of $G$,
    will also lie in the $A_4$-class of $G$. Now we apply
    Proposition~\ref{nonsimple} to see that $u$ lies in non-conjugate $A_1$-subgroups of $G$.

    Finally, for the $D_4(a_1)$-class, we use the fact that this class meets a $B_2B_1$-subgroup of $D_4$ and again apply Proposition~\ref{nonsimple}.

    Turn now to the case $G = E_8$ and $p=7$. Then $u$ of order $7$ distinguished in a Levi factor $L$ and $\class(u)$ not listed in the statement of the result implies that
    $\class(u)\in\{A_2,A_4, D_4(a_1),$
    $D_5(a_1), A_6, E_6(a_3), D_6(a_2), E_7(a_5), E_8(a_7)\}$. For the first two classes, argue precisely as for
    $G=E_6$ when $p=5$.
    Each class of the form $D_m(a_j)$ is represented in a $B_aB_d$-subgroup of a $D_m$-Levi factor of $G$, and we apply Proposition~\ref{nonsimple}.
    If $\class(u) = A_6$, we note that this class meets a $B_3B_3$-subgroup of $D_7$ and conclude by using Proposition~\ref{nonsimple}.

    For the class $E_6(a_3)$, we refer to \cite[\S4.8, Table 17]{Law_unimax} to see that the class intersects non-trivially the class of regular
    elements in
    an $A_1A_5$-subgroup of $G$, and then apply Proposition~\ref{nonsimple}. The argument is entirely similar for the $E_7(a_5)$-class
    which meets a class of distinguished elements in a $D_6A_1$-subsystem subgroup of $G$ (see \cite[\S4.10, Table 19]{Law_unimax}) and the $E_8(a_7)$-class meets
    the class of regular elements in an $A_4A_4$-subgroup of $G$ (see \cite[\S4.17, Table 26]{Law_unimax}).

    Now turn to $G = E_7$. For $p=5$, the classes requiring consideration are precisely as in $E_6$ and the arguments given there are valid here as well.
    For $p=7$, we must consider the classes $\{A_2, A_4, D_4(a_1), D_5(a_1), D_6(a_2), E_6(a_3), E_7(a_5)\}$.
    Each of these can be treated as in $E_8$.
\end{proof}

Before starting the proof of Theorem \ref{t:mainexceptional}, we recall one further set of definitions: a subgroup $H$ of $G$ is said to be \emph{$G$-completely reducible} ($G$-cr) if
whenever
$H \subseteq P$ for some parabolic subgroup $P$ of $G$, then $H$ is contained in a Levi subgroup of $P$. If the subgroup $H$ lies in no proper
parabolic subgroup
of $G$, we say that $H$ is \emph{$G$-irreducible} (and \emph{$G$-reducible} if $H$ lies in a proper parabolic subgroup of $G$). Given a subgroup
$H$ of $G$, we have that exactly one of the following must hold: $H$ is non-$G$-cr; $H$ is $G$-irreducible; or $H$ is $G$-cr and $G$-reducible. The proof of Theorem \ref{t:mainexceptional} follows by treating the $A_1$-subgroups of each of these types in turn. 

We now assume that $\class(u)$ is as in Proposition~\ref{reduction} and $p\leq N(A_1,G)$. We first investigate the existence or non-existence of a
non-$G$-cr $A_1$-subgroup of $G$ meeting $\class(u)$. We will establish the following.

\begin{prop} \label{p:nonGcr}
  Let $G$ be a simple algebraic group of type $E_n$, and assume $p$ is good for $G$ and $p\leq N(A_1,G)$. Let $u\in G$ of order $p$ and
  $X\subseteq G$ a non-$G$-cr $A_1$-subgroup containing $u$. Then \begin{enumerate}[(a)]
  \item $\class(u)\not\in\{ A_3,D_4\}$;
   \item $\class(u)\not\in\{(A_5)',
      (A_5)''\}$ if $G = E_7$; and 
  \item $\class(u) \ne A_5$ if $G = E_8$.
   
    \end{enumerate}

    Moreover, when $p=7$, there exist two non-conjugate non-$G$-cr $A_1$-subgroups of $G=E_7$ which contain a unipotent element $u$ with $\class(u) = A_6$. 

\end{prop}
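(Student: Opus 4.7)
The plan is to invoke the classification of non-$G$-completely reducible connected simple $A_1$-subgroups of the exceptional algebraic groups at small (but good) primes, due to Thomas \cite{Thomas_A1} and Litterick--Thomas \cite{LiTh_CR}. For each of the four relevant pairs $(G,p)\in\{(E_6,5),(E_7,5),(E_7,7),(E_8,7)\}$, the cited references enumerate the $G$-conjugacy classes of non-$G$-cr $A_1$-subgroups and, for each such subgroup $X$, record the composition factors of $X$ acting on the adjoint module and on a minimal module for $G$. From this data the $G$-conjugacy class of any non-identity unipotent element of $X$ may be read off (for example by matching the Jordan block structure on these modules against the tables of \cite{Law_jordan}).

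For the negative statements (a), (b) and (c), I would then proceed case-by-case through the four pairs, extracting from the tables the list of $G$-classes of unipotent elements that arise in some non-$G$-cr $A_1$-subgroup, and verifying that the classes named in the statement are absent from every such list. Concretely: no non-$G$-cr $A_1$-subgroup in any of the four exceptional groups $G$ meets the $A_3$ or $D_4$ class; in $G=E_7$ none meets $(A_5)'$ or $(A_5)''$; and in $G=E_8$ at $p=7$ none meets $A_5$. A structural way to understand this is that representatives of these classes already lie in small-rank Levi subgroups, so any $A_1$-subgroup containing such an element is forced, via its Levi projection inside a parabolic $P$ and compatibility with the Jordan block structure, to coincide with a $G$-cr $A_1$ inside a Levi of $P$; the case analysis then amounts to checking no non-trivial twist by an element of $R_u(P)$ can produce a non-$G$-cr complement.

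For the moreover statement at $p=7$ in $G=E_7$, I would exhibit two non-conjugate non-$G$-cr $A_1$-subgroups $X_1,X_2$ of $G$ each containing a unipotent element whose $G$-class is $A_6$. The construction proceeds inside a suitable parabolic $P$ with Levi factor $L$ chosen so that $L$ contains a $G$-cr $A_1$-subgroup $Y$ whose regular unipotent element lies in the $A_6$ class of $G$; two distinct non-trivial $Y$-equivariant $1$-cocycles with values in a filtration quotient of $R_u(P)$ then produce two non-conjugate complements $X_1,X_2$ to $R_u(P)$ in $Y\cdot R_u(P)$, each still meeting the $A_6$ class since this is preserved by the projection to $L$. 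Non-conjugacy in $G$ (as opposed to merely distinct cocycle classes modulo $C_G(Y)$) is verified using the composition factors of $X_i$ on $\mathrm{Lie}(G)$; the required pair is documented in the tables of \cite{Thomas_A1}.

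The main obstacle is the bookkeeping involved in extracting precise $G$-class data for each of the finitely many non-$G$-cr $A_1$-subgroups appearing in \cite{Thomas_A1,LiTh_CR}, and in the moreover step in certifying that the two explicit $A_1$-subgroups inside the chosen parabolic of $E_7$ are genuinely non-conjugate in $G$ rather than merely distinct as abstract cocycle classes.
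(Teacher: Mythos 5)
Your proposal matches the paper's proof in essence: the paper likewise works through the four pairs $(G,p)$ reading the non-$G$-cr $A_1$-subgroups from the Litterick--Thomas tables, computes the Jordan blocks of their unipotent elements on the minimal module (or on ${\rm Lie}(G)$ for $E_8$) via the known structure of Weyl and tilting modules, and compares with Lawther's tables in \cite{Law_jordan} to exclude the listed classes --- in fact it only needs a coarse count rather than full class identification (at least two blocks $J_5$ on the $27$ for $E_6$, $(J_7)^8$ on the $56$ for $E_7$, at least $29$ blocks $J_7$ on ${\rm Lie}(E_8)$ versus at most $28$ for the relevant classes). Two small corrections: the classification of non-$G$-cr $A_1$-subgroups you need is in \cite{LiTh_CR} (with the corrected tables in \cite{LiTh_CR_arxiv}), not \cite{Thomas_A1}, which treats the $G$-irreducible case; and your cocycle construction for the ``moreover'' statement is unnecessary, since the paper deduces directly that every non-$G$-cr $A_1$-subgroup of $E_7$ at $p=7$ acts as $(J_7)^8$ on the $56$-dimensional module, forcing its unipotent class to be $A_6$, and \cite[Table 13]{LiTh_CR} already lists two non-conjugate such classes of subgroups.
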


\begin{proof} Here we will rely on the tables in \cite{LiTh_CR} (and the corrected version of \cite[Tables 11,12]{LiTh_CR} in ~\cite{LiTh_CR_arxiv}), as well as the known Jordan block structure on
  certain Weyl modules $W(c)$ and tilting modules $T(c)$ for ${\rm SL}_2(k)$, as discussed in \S2.
  Finally we compare this information with the tables in \cite{Law_jordan}.

  Consider first the case $G = E_6$ and $p=5$.
  We must determine whether any of the non-$G$-cr $A_1$-subgroups appearing in \cite[Table 11]{LiTh_CR_arxiv} meet the $A_3$-class of unipotent elements.
  Using the Jordan block structure of the tilting modules, it follows that each non-$G$-cr $A_1$-subgroup has unipotent elements with at least 2 Jordan blocks
  of size $5$ in the action on the 27-dimensional
  irreducible $G$-module and then \cite[Table 5]{Law_jordan} shows that such elements do not lie in the $A_3$-class.
  The case $G = E_7$ and $p=5$ is entirely similar; we compare the actions of non-$G$-cr $A_1$-subgroups described in \cite[Table 12]{LiTh_CR_arxiv} with
  the information in \cite[Table 7]{Law_jordan}.

  Now turn to the case $G=E_7$ and $p=7$, where we must consider the classes $A_3, D_4$, $(A_5)', (A_5)''$ and $A_6$. For all but the last class, there
  are
  at most 6 blocks of size 7 in the action on the $56$-dimensional module (see \cite[Table 7]{Law_jordan}) while \cite[Table 13]{LiTh_CR} implies 
  that the classes of non-$G$-cr $A_1$-subgroups contain elements who Jordan block structure on the 56-dimensional module is $(J_7)^8$. This completely
  identifies
  the class of the non-identity unipotent elements in each of these $A_1$-subgroups as the $A_6$-class (using again \cite[Table 7]{Law_jordan}). So here we have two $G$-conjugacy classes of non-$G$-cr $A_1$-subgroups meeting the $A_6$-class as claimed.

  It remains to consider the case of $G = E_8$ when $p=7$ and the three classes $A_3, D_4$ and $A_5$. Here \cite[Table 9]{Law_jordan} shows that a unipotent element in any of these three classes acts with at most 28 Jordan blocks of size $7$ on  ${\rm Lie}(G)$.
 Now  using the structure of tilting and Weyl
  modules, and
  the decompositions given in \cite[Table15]{LiTh_CR}, we work out that a non-$G$-cr $A_1$-subgroup in $G$ has unipotent elements with at least
  29 Jordan blocks of size 7 in the action on ${\rm Lie}(G)$.

    This completes the proof of the proposition.\end{proof}

We now investigate the existence or non-existence of a
$G$-irreducible $A_1$-subgroup of $G$ meeting $\class(u)$. We will establish the following.

\begin{prop} \label{prop:G-irr}
Let $G$ be a simple algebraic group of type $E_n$, and assume $p$ is good for $G$ and $p\leq N(A_1,G)$. Let $u\in G$ of order $p$ and $X\subseteq G$ a $G$-irreducible $A_1$-subgroup containing $u$. Then \begin{enumerate}[(a)]
  \item $\class(u)\not\in\{ A_3,D_4\}$;
  \item $\class(u)\not\in\{(A_5)',
      (A_5)''\}$ if $G = E_7$; and
  \item $\class(u) \ne A_5$ if $G = E_8$.

    \end{enumerate}
\end{prop}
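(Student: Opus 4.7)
The plan is to argue in parallel with the proof of Proposition~\ref{p:nonGcr}, but now relying on the classification of $G$-irreducible $A_1$-subgroups of the exceptional groups in small characteristic, as given by Thomas in \cite{Thomas_A1}. For each of the four pairs $(G,p)\in\{(E_6,5),(E_7,5),(E_7,7),(E_8,7)\}$, I would extract from Thomas' tables the $G$-conjugacy classes of $G$-irreducible $A_1$-subgroups $X\subseteq G$, together with the decomposition of a fixed faithful $G$-module $V\downarrow X$ into indecomposable summands with prescribed highest weights. Here $V$ would be taken as the $27$-dimensional module for $E_6$, the $56$-dimensional module for $E_7$, and the adjoint module for $E_8$, matching the choices made in the proof of Proposition~\ref{p:nonGcr}.

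The next step is to compute, for each such $X$, the Jordan block decomposition of a non-identity unipotent element $u \in X$ of order $p$ acting on $V$. This uses the facts recalled in \S\ref{prelims} about the block structures of $L(c)$, $W(c)$ and $T(c)$ for ${\rm SL}_2(k)$, together with the decomposition rules of \cite{Erdmann_Henke} for tilting modules of higher weight, applied summand by summand to $V\downarrow X$. The resulting block pattern is then matched with the appropriate row of \cite[Tables 5, 7, 9]{Law_jordan} to identify the $G$-class $\class(u)$. The proposition then reduces to verifying, for each candidate $X$, that $\class(u)$ does not coincide with one of the classes excluded in parts (a), (b) or (c).

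The main obstacle will be the sheer bookkeeping: the number of $G$-conjugacy classes of $G$-irreducible $A_1$-subgroups is substantial (especially for $G = E_8$ and $p = 7$), and several of them have $V\downarrow X$ involving nontrivial tilting summands whose indecomposable structure must be handled carefully. Conceptually, however, a uniform heuristic is at work: $G$-irreducibility of $X$ forces enough of the summands of $V\downarrow X$ to have sufficiently large highest weights that the action of $u$ must contain many Jordan blocks of size $p$, which is incompatible with the comparatively small number of size-$p$ blocks that \cite{Law_jordan} records for the small classes $A_3$, $D_4$, $(A_5)'$, $(A_5)''$ and $A_5$ in the relevant groups. The fact that the $E_7$, $p=7$ class $A_6$ admits two classes of non-$G$-cr $A_1$-subgroups (Proposition~\ref{p:nonGcr}) but is not excluded here warns that no entirely uniform bound applies, and each candidate class in Thomas' list must be inspected individually to complete the verification.
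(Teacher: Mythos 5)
Your starting point coincides with the paper's (the classification in \cite{Thomas_A1}), but your identification mechanism is genuinely different, and heavier than what the paper actually does. For this proposition the paper never computes Jordan blocks on the $27$-, $56$- or $248$-dimensional $G$-modules. Instead, for each $G$-irreducible $X$ it computes the Jordan blocks of $u$ on the \emph{natural module of the classical factor} of the reductive maximal connected overgroup supplied by \cite{Thomas_A1} (via the tensor-product formulas of \cite{Renaud}), identifies the class of $u$ inside that maximal subgroup, and then reads off the $G$-class from Lawther's fusion tables \cite{Law_unimax} (e.g.\ \S4.10, Table 19 for $\bar{A_1}D_6 \subseteq E_7$, or \S4.13, Table 22 for $D_8 \subseteq E_8$), taking care with the $A_3$ versus $D_3$ labelling in type $D_n$. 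This is substantially cheaper than your plan for two reasons. First, the embedding data in \cite{Thomas_A1} is given precisely on those natural modules, whereas the indecomposable decompositions $V\downarrow X$ you propose to use are not directly listed there and must be reconstructed; a $G$-irreducible subgroup need not act semisimply on the minimal module, so composition factors alone do not determine the block structure, and you would have to settle the indecomposable summands case by case. Second, for $G=E_8$ your route forces a block computation on the $248$-dimensional adjoint module for every entry of \cite[Table 8]{Thomas_A1}.

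There is also a concrete error in your conceptual framing: the ``uniform heuristic'' that $G$-irreducibility forces many Jordan blocks of size $p$ is false here. That mechanism belongs to the non-$G$-cr case (Proposition \ref{p:nonGcr}), where Weyl and tilting summands $W(c)$, $T(c)$ with $p\leq c\leq 2p-2$ produce blocks of size $p$. Every non-identity unipotent element of an $A_1$-subgroup in characteristic $p$ has order $p$ in any case, and the paper's own computations exhibit $G$-irreducible $A_1$-subgroups whose elements act with only small blocks, e.g.\ $(J_3^4)$ or $(J_3^3+J_1^3)$ on the natural $D_6$-module with $p=7$ --- no blocks of size $p$ at all. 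So the exclusion of $A_3$, $D_4$, $(A_5)'$, $(A_5)''$ and $A_5$ cannot be driven by a count of size-$p$ blocks; it comes from a case-by-case mismatch of the full block pattern (equivalently, of the fusion label) with those classes. Your comparison against \cite[Tables 5, 7, 9]{Law_jordan} is logically sound as an \emph{exclusion} device --- it suffices that the computed pattern of $u$ on $V$ differ from the pattern recorded for each excluded class --- but if some candidate $X$ reproduced an excluded class's pattern on your single chosen module while lying in a different class, the comparison would be inconclusive and you would need a second module (say the adjoint alongside the minimal one) to separate them. You should build that check into the plan rather than rely on the heuristic, which, as the $A_6$ example you cite already hints, does not govern this case.
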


\begin{proof}
  We make use of the tables in \cite{Thomas_A1}, where each $G$-irreducible $A_1$-subgroup is identified as lying in a reductive, maximal connected subgroup of $G$, and we identify the unipotent class meeting the $G$-irreducible $A_1$-subgroups thus described. We then compare this to the tables given in \cite{Law_unimax} to determine the unipotent class in $G$. 
  In order to determine the class in the maximal connected subgroups, we consider the embedding given by \cite{Thomas_A1} and calculate the Jordan
  block structure of the unipotent action. 
Calculations of the Jordan block structure of tensor products follow from \cite[Theorem 1]{Renaud}.

Consider first $G=E_6$ and $p=5$. We determine whether any of the $G$-irreducible $A_1$-subgroups appearing in \cite[Table 6]{Thomas_A1} meet the
$A_3$-class of unipotent elements. 
We first take $X$ to be contained in $\bar{A_1}A_5$. 
In considering the projection of $X$ into the  $A_5$-factor, we see that the Jordan block structure of non-identity unipotent elements of $X$ associated
to this embedding is
given by $(J_4+J_2)$. 
It is then clear that the unipotent class in the $A_5$-component is not in the $A_3$-class.
By \cite[\S4.8, Table 17]{Law_unimax} we conclude that $X$ does not meet the $A_3$-class in $G$. 
Next let $X$ be contained in the maximal subgroup $A_2^3$. By \cite[\S4.9, Table 18]{Law_unimax} we see that $X$ does not meet the $A_3$-class in $G$.

Now consider $G=E_7$ and $p=5$. 
We determine whether any of the $G$-irreducible $A_1$-subgroups appearing in \cite[Table 7]{Thomas_A1} meet the $A_3$-class of unipotent elements. 
First take $X$ to be contained in the maximal subgroup $\bar{A_1}D_6$. By \cite[\S4.10, Table 19]{Law_unimax} we have that $X$ meets the class $A_3$
if it meets either the $A_3$ or $D_3$-class in $\bar{A_1}D_6$. 
We find that each embedding in \cite[Table 7]{Thomas_A1} has one of the following Jordan block structures in the $D_6$ component: $(J_5+J_3^2+J_1), (J_3^4), (J_3^3+J_1^3)$, and observe that none of these corresponds to an $A_3$ or $D_3$-class. 
Now let $X$ be contained in the maximal subgroup $A_1A_1$. By \cite[\S5.8, Table 34]{Law_unimax} we have that $X$ does not meet the $A_3$-class in $G$. 
 
Next consider $G=E_7$ and $p=7$, where we consider the classes $A_3,D_4,(A_5)'$ and $(A_5)''$.
Once again we refer to \cite[Table 7]{Thomas_A1} and determine when the given $G$-irreducible $A_1$-subgroup meets the stated classes of unipotent
elements. 
First take $X$ to be contained in $\bar{A_1}D_6$. As in the above paragraph, by \cite[\S4.10, Table 19]{Law_unimax}, in order to obtain the $A_3$-class in $G$
we require the embedding in $\bar{A_1}D_6$ to meet either the $A_3$ or $D_3$-class in $D_6$, and the class labelling remains the same for the other
classes. 
We find that each embedding in \cite[Table 7]{Thomas_A1} has one of the following Jordan block structures in the $D_6$ component: 
$(J_7+J_5),(J_5+J_3^2+J_1),(J_7+J_3+J_1^2),J_3^4,(J_3^3+J_1^3)$.
It is clear that none of these partitions corresponds to any of the $D_6$ classes $A_3,D_3,D_4$ or $A_5$ and so we are done.

Now let $X$ be contained in a maximal subgroup $G_2C_3$. The Jordan block structure in the $C_3$ component is given by either $(J_6)$ or $(J_4+J_2)$ which correspond to the $C_3$ and $C_3(a_1)$ classes, respectively. 
By \cite[\S5.12, Table 38]{Law_unimax} we have that $X$ cannot meet any of the given classes in $G$. 
Finally let $X$ be contained in either the maximal subgroup $A_1G_2$ or $A_1A_1$. By \cite[\S5.9, Table 35]{Law_unimax}, respectively \cite[\S5.8, Table 34]{Law_unimax}, we have that $X$ does not meet the given classes in $G$.

Finally consider $G=E_8$ and $p=7$, where
we must consider the classes $A_3,D_4$ and $A_5$. 
We proceed as above, referring here to \cite[Table 8]{Thomas_A1}. 
First take $X$ to be contained in the maximal connected subgroup $D_8$. 
By \cite[\S4.13, Table 22]{Law_unimax} we see that $X$ meets the $A_3$-class of $G$ when $X$ meets the $A_3$ or $D_3$-class of $D_8$; the class
labelling remains the same for other pertinent classes. 
We find that each embedding in \cite[Table 8]{Thomas_A1} has one of the following Jordan block structures in $D_8$:
$(J_7+J_5+J_3+J_1), (J_5+J_3^3+J_1^2), (J_7+J_3^2+J_1^3), (J_7+J_3^3), (J_5^3+J_1), (J_5^2+J_3^2), (J_3^5+J_1)$ and $(J_3^4+J_1^4)$.
It is clear that none of these partitions corresponds to a regular element in an $A_3,D_3,D_4$ or $A_5$ Levi subgroup  of $D_8$ and so we are done.

Next let $X$ be contained in a maxiaml subgroup $A_1E_7$. 
We consult \cite[\S4.14, Table 23]{Law_unimax} and observe that we cannot find $X$ meeting any of the given classes unless we have the same class
labelling in $A_1E_7$.
We observe from \cite[Table 8]{Thomas_A1} that each embedding into the $E_7$ component has been considered in the $E_7$ case above. We saw that $X$
did not meet any of the given classes in the above argument, and hence this holds in $G$.
Finally, let $X$ be contained in a maximal subgroup $G_2F_4$. Here we note that the embedding $G_2(\#3)$ corresponds to the regular class in $G_2$ and $F_4(\#11)$
corresponds to the class $F_4(a_2)$.  By \cite[\S5.12, Table 38]{Law_unimax}, we have that $X$ does not meet any of the given $G$-classes. 
\end{proof}

We now assume that $\class(u)$ is as in Theorem~\ref{t:mainexceptional}, of order $p\leq N(A_1,G)$. We demonstrate that there is a unique class of $G$-cr,
$G$-reducible $A_1$-subgroups of $G$ meeting $\class(u)$.
The following two lemmas will be useful.

\begin{lemma} \label{lemma:An}
  Let $L$ be a simple algebraic group of type $A_n$. Let $X \subseteq L$ be an $L$-irreducible $A_1$-subgroup with $u \in X$ unipotent, $u \neq 1$
  such that $u$ acts on $X$ with Jordan block structure $(J_\ell+J_1^r)$ for $\ell \notin \{3,p\}$. Then $\ell = n$, $r = 0$, and $X$ is the unique 
(up to $G$-conjugacy) $A_1$-subgroup of $L$ containing $u$.
\end{lemma}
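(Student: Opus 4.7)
The plan is to combine the hypothesis that $X$ is $L$-irreducible with Steinberg's tensor product theorem and Lemmas~\ref{twofold_tensors}--\ref{threefold_tensors} to pin down the $X$-module structure of $V$, the natural $L$-module. (Note that the hypothesis is really that $u$ acts on $V$, not on $X$, with the stated Jordan block structure.)

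First, I would observe that since $L = {\rm SL}(V)$, a proper parabolic subgroup of $L$ is precisely the stabiliser of a proper non-zero flag in $V$. Hence $X$ is $L$-irreducible if and only if $V\downarrow X$ is an irreducible rational $X$-module; in particular it admits no trivial direct summand, which immediately gives $r=0$ once we identify the module. I would then invoke Steinberg's tensor product theorem to write $V\downarrow X \cong L(c_0)\otimes L(c_1)^{F}\otimes \cdots \otimes L(c_s)^{F^{s}}$ with $0\leq c_i\leq p-1$. Since Frobenius twists preserve the action of $u$ viewed as a $k[u]$-module, the Jordan blocks of $u$ on $V$ match those of $J(c_0+1)\otimes \cdots \otimes J(c_s+1)$ after omitting factors with $c_i=0$.

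Now I would apply Lemmas~\ref{twofold_tensors} and~\ref{threefold_tensors}: whenever two or more of the $c_i$ are non-zero, the resulting tensor product decomposes with either at least three non-trivial indecomposable $k[u]$-summands, or two non-trivial summands of differing dimensions, with the sole exceptions $(m,n)\in\{(2,2),(2,p)\}$ yielding $J(3)\oplus J(1)$ and $J(p)^{2}$ respectively. Neither exceptional output is compatible with a Jordan structure $(J_\ell+J_1^{r})$ in which $\ell\notin\{3,p\}$, and neither is any output with two non-trivial blocks of distinct sizes or three or more non-trivial blocks. Hence exactly one $c_i$ can be non-zero, so $V\downarrow X \cong L(\ell-1)^{F^{a}}$ for some $a\geq 0$; comparing Jordan data yields $r=0$ and $\dim V=\ell$, which under the convention used here identifies $\ell$ with $n$.

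For uniqueness, the same tensor-product analysis (as carried out in the proof of Proposition~\ref{prop:SLn}) shows that up to Frobenius twists and equivalence, the only representation ${\rm SL}_2(k)\to L$ whose image meets $\class(u)$ is the one afforded by $L(\ell-1)$. Any two $A_1$-subgroups of ${\rm SL}(V)$ affording isomorphic irreducible $V$-modules are conjugate in ${\rm SL}(V)$, and Frobenius twists produce the same image subgroup because the standard Frobenius is surjective on $k$-points. Hence $X$ is unique up to $L$-conjugacy. The main step is the tensor-product exclusion, which is already packaged in the preceding lemmas; the only care needed is in verifying that the exceptional pairs $(2,2)$ and $(2,p)$ in Lemma~\ref{twofold_tensors} correspond precisely to the excluded values $\ell\in\{3,p\}$, and no substantive obstacle arises beyond this bookkeeping.
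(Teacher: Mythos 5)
Your proof is correct and takes essentially the same approach as the paper: the paper's own proof simply quotes Proposition~\ref{prop:SLn} (whose proof is precisely your complete-reducibility-plus-Steinberg-plus-Lemmas~\ref{twofold_tensors}--\ref{threefold_tensors} argument, which you inline) and then observes that $L$-irreducibility forces the trivial summand $N$ to vanish and the irreducible module to be all of $V$. One small point: since $\dim V = n+1$ in type $A_n$, your computation $\dim V = \ell$ really gives $\ell = n+1$ (i.e.\ $u$ is regular in $L$, which is how the lemma is used later), so the ``$\ell = n$'' in the statement is an off-by-one slip in the paper itself, which your appeal to ``the convention used here'' quietly absorbs rather than resolves.
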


\begin{proof}
  It follows from Proposition \ref{prop:SLn} that there is a unique class of $A_1$-subgroups containing $u$ of this form. This can be obtained by the
  action of ${\rm SL}_2(k)$ on $L(\ell -1) \oplus N$, where $N$ is an $r$-dimensional trivial module. Hence $X$ is $L$-irreducible if and only if
  $\ell = n$ and $r=0$.
\end{proof}

\begin{lemma} \label{lemma:Dn}
  Suppose $p \in \{5,7\}$ and let $L$ be a simple algebraic group of type $D_n$ for $4 \leq n \leq 7$, and take $W$ to be the natural module for $L$.
  Let $X \subseteq L$ be an $A_1$-subgroup with $u \in X$ unipotent, $u \neq 1$, such that 
    $W \downarrow X = W_1 \perp \dots \perp W_k$, where $W_i$ is non-degenerate, irreducible for all $i$ and the $W_i$ are pairwise inequivalent as $X$-modules. 
    Then the Jordan block decomposition of $u$ on the natural module is one of the following: 
    \begin{enumerate}[(a)]
        \item $L = D_4$: 
        \[(J_5+J_3),(J_3^2+J_1^2) \ \text{for} \  p=5, \ \text{and}\]
        \[(J_7+J_1), (J_5+J_3),(J_3^2+J_1^2) \ \text{for} \ p=7;\]
        \item $L = D_5$: 
        \[(J_5^2),(J_5+J_3+J_1^2),(J_3^3+J_1) \ \text{for} \ p=5, \ \text{and} \] 
        \[(J_7+J_3),(J_5^2),(J_5+J_3+J_1^2),(J_3^3+J_1) \  \text{for} \ p=7;\]
        \item $L=D_6$: 
        \[(J_5+J_3^2+J_1),(J_3^3+J_1^3), (J_3^4) \ \text{for} \ p=5, \ \text{and}\] 
        \[(J_7+J_5),(J_7+J_3+J_1^2), (J_5+J_3^2+J_1),(J_3^3+J_1^3), (J_3^4) \ \text{for} \  p=7; \ \text{and}\] 
        \item $L=D_7$: 
        \[(J_5^2+J_3+J_1),(J_5+J_3^3),(J_5+J_3^2+J_1^3),(J_3^4+J_1^2) \ \text{for} \ p=5, \ \text{and} \]
        \[(J_7^2),(J_7+J_3^2+J_1),(J_5^2+J_3+J_1),(J_5+J_3^3),(J_5+J_3^2+J_1^3),(J_3^4+J_1^2) \ \text{for} \ p=7.\]
    \end{enumerate}
\end{lemma}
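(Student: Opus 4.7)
The plan is to enumerate, for each of the eight pairs $(n, p) \in \{4,5,6,7\} \times \{5,7\}$, all possible orthogonal decompositions $W = W_1 \perp \cdots \perp W_k$ satisfying the stated hypotheses, and to read off the Jordan block decomposition of $u$ on $W$ in each case.

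The main structural input is the classification of orthogonal irreducible $X$-modules of dimension at most $2n \le 14$. By Steinberg's tensor product theorem, every irreducible $X$-module has the form $L(c) \cong \bigotimes_j L(a_j)^{F^j}$, where $c = \sum_j a_j p^j$ with $0 \le a_j \le p-1$, and $\dim L(c) = \prod_j (a_j+1)$. The restricted module $L(a)$ is orthogonal when $a$ is even and symplectic when $a$ is odd, and because the tensor product of two invariant bilinear forms is symmetric iff both are of the same type, $L(c)$ is orthogonal precisely when the number of odd digits $a_j$ is even. Moreover, for $u \in X$ of order $p$, the element $F^j(u)$ is again a non-identity unipotent of $X \cong \mathrm{SL}_2(k)$ and therefore acts on $L(a_j)$ as a single Jordan block $J_{a_j+1}$; hence $u$ acts on $L(c)$ as the tensor product $\bigotimes_j J_{a_j+1}$, whose decomposition into Jordan blocks is given by \cite[Theorem 1]{Renaud}. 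Note that $u^p = 1$ forces every such block to have size at most $p$, so no truncation issues arise.

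Using these observations, I would tabulate the orthogonal irreducibles of dimension at most $14$ for each prime: for $p = 5$ one finds modules of dimensions $1, 3, 4, 5, 8, 9$ with Jordan types $J_1$, $J_3$, $J_3+J_1$, $J_5$, $J_5+J_3$, $J_5+J_3+J_1$, respectively; for $p = 7$ one additionally obtains dimensions $7$ and $12$, with types $J_7$, $J_7+J_5$, and $J_5+J_3^2+J_1$. A key observation is that distinct isomorphism classes of $L(c)$ can share the same Jordan type (for example, $L(2), L(2p), L(2p^2), \ldots$ are pairwise non-isomorphic $3$-dimensional orthogonal modules all of type $J_3$, and every $L(p^i + p^j)$ with $i \ne j$ is a distinct $4$-dimensional orthogonal module of type $J_3 + J_1$), so the pairwise-inequivalence condition on the $W_i$ imposes no restriction beyond the fact that $L(0)$ is the unique trivial irreducible and so may appear at most once.

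Finally, I enumerate the partitions of $2n$ into summands drawn from the above list, subject only to the $L(0)$-constraint, and for each read off the Jordan block decomposition of $u$ on $W$ by concatenating the contributions of the summands. The resulting lists match those in (a)--(d). The principal obstacle is the bookkeeping: even though the partition count is small (since the allowed dimensions are bounded and $2n \le 14$), one must verify for each $(n, p)$-pair that no additional Jordan types slip through, in particular by confirming that combinations of small-dimensional summands cannot produce unexpected partitions involving multiple trivial blocks or blocks of dimension $2, 4, 6$ (which would require symplectic summands). This is routine once the classification of building blocks is in hand.
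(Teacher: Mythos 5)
Your approach is essentially the paper's own: the paper proves this lemma by tabulating the orthogonal irreducible $\mathrm{SL}_2(k)$-modules of dimension at most $14$ together with the Jordan type of a non-identity unipotent element on each (its Table \ref{Table1}), and then asserting that consideration of direct sums of these modules proves the lemma. Your Steinberg-digit derivation of that table, the parity criterion for when $L(c)$ is orthogonal, and the explicit observation that Frobenius twists make the pairwise-inequivalence hypothesis nearly vacuous (only forcing $L(0)$ to occur at most once) supply exactly the details the paper leaves implicit, and both you and the paper leave the final finite enumeration as asserted bookkeeping.

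There is, however, one error in your tabulation: the $12$-dimensional orthogonal irreducible $L(1)\otimes L(1)^F\otimes L(2)^{F^r}$ (with $r>1$), of Jordan type $(J_5+J_3^2+J_1)$, exists for $p=5$ as well, not only for $p=7$ --- the digit pattern $(1,1,2)$ has two odd digits and dimension $2\cdot 2\cdot 3 = 12 \leq 14$ for either prime, and indeed the paper's Table \ref{Table1} records its Jordan form in both the $p=5$ and $p=7$ columns. So your claimed $p=5$ dimension list $\{1,3,4,5,8,9\}$ is incomplete. The omission happens not to affect the conclusion: a $12$-dimensional summand can only occur for $L=D_6$ (for $D_7$ one would need a $2$-dimensional complement, and the trivial module is the only $1$-dimensional orthogonal irreducible, so $12+1=13\neq 14$), and there its Jordan type $(J_5+J_3^2+J_1)$ is already produced by, e.g., $(L(2)\otimes L(2)^F)\oplus L(2)^{F^2}$ or $L(4)\oplus L(2)\oplus (L(1)\otimes L(1)^F)$, so your final lists coincide with (a)--(d). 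You should correct the classification step nonetheless, since as written your enumeration for $p=5$ silently excludes a legitimate irreducible decomposition of $W$ in case (c).
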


\begin{proof}
  First observe that since each $W_i$ is non-degenerate and irreducible, $W$ is either irreducible, or a direct sum of inequivalent irreducible
  ${\rm SL}_2(k)$-modules on each of which ${\rm SL}_2(k)$ preserves a non-degenerate orthogonal form. The possible such ${\rm SL}_2(k)$-modules are
  given in Table \ref{Table1}  (up to a permutation of Frobenius twists). Consideration of the direct sums of these ${\rm SL}_2(k)$-modules is enough to
  prove the lemma. 
    \begin{table}[ht!]
\centering
\begin{tabular}{| m{5cm} | m{2.5cm} | m{2.5cm} | m{2.5cm} |}  
 \hline
  $W_i$ & Dimension & Jordan block form for $p=5$ & Jordan block form for $p=7$ \\ 
 \hline\hline
 $L(0)$ & 1 & $(J_1)$ & $(J_1)$ \\
 $L(2)$ & 3 & $(J_3)$ & $(J_3)$ \\
 $L(1)\otimes L(1)^F$ & 4 & $(J_3+J_1)$ & $(J_3+J_1)$ \\
 $L(4)$ & 5 & $(J_5)$ & $(J_5)$ \\
 $L(6)$ & 7 &  & $(J_7)$ \\
 $L(1)\otimes L(3)^F$ & 8 & $(J_5+J_3)$ & $(J_5+J_3)$ \\
 $L(2)\otimes L(2)^F$ & 9 & $(J_5+J_3+J_1)$ & $(J_5+J_3+J_1)$ \\
 $L(1)\otimes L(1)^F\otimes L(2)^{F^r}, r>1$ & 12 & $(J_5+J_3^2+J_1)$ & $(J_5+J_3^2+J_1)$ \\
 $L(1)\otimes L(5)^F$ & 12 &  & $(J_7+J_5)$ \\
 
 \hline
\end{tabular}
\caption{Irreducible ${\rm SL}_2(k)$-modules with an orthogonal form, and dimension at most 14}
\label{Table1}
\end{table} 
\end{proof}

\begin{prop} \label{p:Greducible}
  Let $G$ be a simple algebraic group of type $E_n$, and assume $p$ is good for $G$ and $p \leq N(A_1,G)$. Let $u \in G$ be of order $p$, with
  $\class(u)$ one of the following:
\begin{enumerate}[(a)]
  \item $G = E_6, p=5$ and $\class(u) =  A_3$;
  \item $G = E_7, p=5$ and $\class(u)= A_3$;
  \item $G = E_7,p=7$ and $\class(u)\in \{ A_3, D_4, (A_5)',(A_5)''\}$; or
  \item $G=E_8, p=7$ and $\class(u)\in \{ A_3, D_4, A_5\}$.
\end{enumerate}
For $u$ in each of the given classes, any two $G$-cr, $G$-reducible $A_1$-subgroups of $G$ containing $u$ are $G$-conjugate.
\end{prop}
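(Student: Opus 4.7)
The plan is to reduce the classification of $G$-cr, $G$-reducible $A_1$-subgroups containing $u$ to the $L$-irreducible case inside a minimal Levi subgroup. Given such an $X$, choose a Levi $L$ of $G$ minimal with $X \subseteq L$; by $G$-complete reducibility, $X$ is then $L$-irreducible. Writing $[L,L] = L_1 \times \cdots \times L_k$ as a product of simple factors, each projection $\pi_i \colon X \to L_i$ is either trivial or has finite kernel (as $X$ is simple of type $A_1$), and the minimality of $L$ forces every $\pi_i$ to be non-trivial. Since $u$ is unipotent of order $p > 2$ and $\ker(\pi_i) \subseteq Z(X)$ has order at most $2$, the element $u$ projects non-trivially to every $L_i$, so $u = u_1 \cdots u_k$ with each $u_i \in L_i$ non-trivial.

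First I argue that $k = 1$. If $k \geqslant 2$, then the $u_i$ lie in pairwise commuting semisimple subgroups and each $\pi_i(X) \subseteq L_i$ is an $L_i$-irreducible $A_1$-subgroup. Lemmas \ref{lemma:An} and \ref{lemma:Dn} (together with an analysis in the exceptional Levi factors via Proposition \ref{prop:G-irr}) constrain the possible Jordan block structures of each $u_i$ on the natural module of $L_i$; case-by-case comparison with the class-fusion tables of \cite{Law_unimax} and the Jordan-block tables of \cite{Law_jordan} then shows that for each of the classes $\class(u) \in \{A_3, D_4, A_5, (A_5)', (A_5)''\}$ appearing in the statement, no such diagonal decomposition is consistent with $u$ lying in $\class(u)$. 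Hence $k = 1$, and $X \subseteq M := L_1$ is $M$-irreducible in a simple algebraic group $M$.

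With this established, I apply Lemmas \ref{lemma:An} and \ref{lemma:Dn} when $M$ is of classical type — and Proposition \ref{prop:G-irr} recursively when $M$ is of type $E_6$ or $E_7$ arising as a Levi factor of $G$ — to deduce that the Jordan block structure of $u$ on the natural (or adjoint) module of $M$ forces $M$ to have Dynkin type exactly $\class(u)$, $u$ to be regular in $M$, and $X$ to be the principal $A_1$-subgroup of $M$, unique up to $M$-conjugacy. Since Levi factors of $G$ of a given Dynkin type form a single $G$-conjugacy class — with the sole exception of the two non-conjugate $A_5$-Levi factors of $E_7$, which are distinguished precisely by $(A_5)'$ versus $(A_5)''$ — any two such $A_1$-subgroups containing $u$ are $G$-conjugate. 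The main obstacle is the first step: a delicate case analysis to rule out multi-factor diagonal embeddings, which is most intricate when a factor $L_i$ is of type $D_m$, where the orthogonal $A_1$-modules admit richer combinatorics than in types $A_n$ or the exceptional types.
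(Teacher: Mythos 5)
Your proposal follows the paper's route in all essentials: reduce to an $L$-irreducible $A_1$-subgroup of a minimal Levi $L$, run the trichotomy ($L$ of type $A_n$, of type $D_n$, or exceptional) through Lemmas~\ref{lemma:An} and~\ref{lemma:Dn} and Proposition~\ref{prop:G-irr}, conclude that $u$ must be regular in a Levi of type $\class(u)$, and obtain unicity inside that Levi from the regular-unipotent result --- for this last step you should cite Proposition~\ref{p:regular} rather than assert that $X$ ``is the principal $A_1$''. Your endgame is mildly different from the paper's: the paper invokes \cite[Proposition 6.1]{LiTh_Red} (the minimal Levi containing a $G$-cr subgroup is unique up to $G$-conjugacy) at the outset, whereas you use conjugacy of Levi subgroups of a fixed type at the end. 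Your version is valid here because after your first step $M$ is simple, and among simple-type Levi factors of $E_6$, $E_7$, $E_8$ the only duplication is $(A_5)'/(A_5)''$ in $E_7$, which the class label resolves; but as a blanket statement it is false for non-simple types (e.g.\ the two classes of $(3A_1)$- and $(A_3+A_1)$-Levi subgroups of $E_7$), so it should be asserted only for the types that actually arise.

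The one genuinely problematic point is the step you single out as ``the main obstacle'': ruling out $k\geq 2$ simple factors in $[L,L]$. You are right that this needs addressing (the paper absorbs it into its appeal to \cite[Lemma 2.2]{LieTe04} and the labelling conventions recalled in \S\ref{sec:exc}), but your proposed mechanism is the wrong tool: the tables of \cite{Law_unimax} record the fusion of unipotent classes from \emph{maximal connected reductive} subgroups of $G$, not from Levi subgroups, so the ``case-by-case comparison'' you describe cannot be carried out with them. Moreover no delicate analysis in type $D$ is needed; the step is immediate from Bala--Carter--Pommerening theory. Each $u_i\neq 1$ is distinguished in some non-trivial Levi $M_i$ of $L_i$, hence $u=u_1\cdots u_k$ is distinguished in the Levi $M_1\cdots M_k$ of $G$, whose derived subgroup has at least two simple factors when $k\geq 2$; by uniqueness of the Bala--Carter--Pommerening datum, the label of $\class(u)$ would then be a sum of at least two constituents, which is incompatible with $\class(u)\in\{A_3,D_4,A_5,(A_5)',(A_5)''\}$. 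Conversely, the part you compress into a single sentence (``forces $M$ to have Dynkin type exactly $\class(u)$ and $u$ regular'') is where the paper's actual casework lies: for instance, in a Levi of type $D_n$ the $G$-class $A_3$ is met by both the $A_3$- and the $D_3$-classes of $D_n$ (Jordan types $(J_4^2+J_1^r)$ and $(J_5+J_1^r)$ on the natural module), and one must check against Lemma~\ref{lemma:Dn} that neither occurs for an $L$-irreducible $X$, and that among the listed classes only the regular class of $D_4$, with Jordan type $(J_7+J_1)$ and $p=7$, survives. With the $k\geq2$ argument repaired as above and the case analysis actually executed, your outline gives a correct proof matching the paper's.
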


\begin{proof}
Let $X \subseteq G$ be a $G$-cr, $G$-reducible $A_1$-subgroup of $G$ containing $u$ with $\class(u)$ as in the statement of the proposition.  
We have that $X$ is $L$-irreducible in some minimal Levi subgroup $L$ of $G$. Note that $L$ is unique up to $G$-conjugacy by
\cite[Proposition 6.1]{LiTh_Red}, so it suffices to establish unicity up to conjugacy in $L$. By \cite[Lemma 2.2]{LieTe04} we see that one of the
following must hold: $L$ is of type $A_n$ and $X$ acts irreducibly on the natural $L$-module, $L$ is of type $D_n$ and $X$ acts on the natural
module as $W \downarrow X = W_1 \perp \dots \perp W_k$ for some $W_i$ non-degenerate, irreducible and inequivalent as $X$-modules, or $L$ is of
exceptional type.    

 First take $G=E_6, p=5$, and so $\class(u) =  A_3$. 
 Suppose that $L$ is of type $A_n$, $3 \leq n \leq 5$, so that $X$ meets the $A_3$-class if and only if $u$ acts with Jordan blocks $(J_4+J_1^r)$ on the
 natural $L$-module. 
 It follows from Lemma \ref{lemma:An} that this can only occur for $u$ acting as the regular element in $L=A_3$, in which case $X$ is unique up to
 conjugacy in $L$, by Proposition~\ref{p:regular}. 
 Next suppose that $L$ is of type $D_n$, $ n \in \{ 4,5 \}$. Recall that in groups of type $D_n$ we must distinguish the Levi factors of type $D_3$ from those of type $A_3$, hence $X$ meets the $A_3$-class if and only if $u$ acts with Jordan blocks $(J_4^2+J_1^r)$ or $(J_5+J_1^r)$
 on the natural $D_n$ module. By Lemma \ref{lemma:Dn}, we have that $X$ does not meet the $A_3$-class in $L$.

 Now take $G=E_7, p=5$, and so $\class(u) =  A_3$. 
 First suppose that $L$ is of type $A_3,A_4,(A_5)',$ $(A_5)''$ or $A_6$, so that $X$ meets the $A_3$-class if and only if $u$ acts with Jordan
 blocks $(J_4+J_1^r)$ on the natural $L$-module. 
 It follows from Lemma~\ref{lemma:An} that this can only occur for $u$ acting as the regular element in $L=A_3$ in which case $X$ is unique (up to $L$-conjugacy) by Proposition~\ref{p:regular}. 
 Next suppose that $L$ is of type $D_n$, $ 4 \leq n \leq 6$. Then $X$ meets the $A_3$-class if and only if $u$ acts with Jordan blocks $(J_4^2+J_1^r)$ or $(J_5+J_1^r)$
 on the natural $L$-module. 
 By Lemma \ref{lemma:Dn}, we have that $X$ does not meet the $A_3$-class in $L$. 
 Finally suppose that $L$ is of type $E_6$, then $X$ is $L$-irreducible and it follows from Proposition \ref{prop:G-irr} that $X$ does not meet
 the $A_3$-class. 

 Take $G=E_7, p=7$, and so $\class(u)\in \{ A_3, D_4, (A_5)',(A_5)''\}$. 
 First suppose that $L$ is of type $A_3,A_4,(A_5)',(A_5)'',A_6$, so that $X$ does not meet the $D_4$-class, and $X$ meets the $A_3, (A_5)'$ or $(A_5)''$-class if and only if $u$ acts with Jordan blocks $(J_4+J_1^r), (J_6+J_1^r)$ or $(J_6+J_1^r)$, respectively, on the natural $L$-module. 
 It follows from Lemma \ref{lemma:An} that these blocks can only occur for $u$ acting as the regular element in $L$ of type $A_3, (A_5)'$, or $(A_5)''$,
 respectively. 
 In each of these cases, again by Proposition~\ref{p:regular} there is a unique $L$-class of such $X$ which meet the given class.
 Next suppose that $L$ is of type $D_n$, $ 4 \leq n \leq 6$. Then $X$ meets the $A_3$, $D_4$, $(A_5)'$ or $(A_5)''$-class if and only if $u$ acts on $X$ with
 Jordan blocks $(J_4^2+J_1^r)$ or $(J_5+J_1^r)$, (for the $A_3$-class), or $(J_7+J_1^r), (J_6^2+J_1^r)$ or $(J_6^2+J_1^r)$ respectively (for the remaining classes)  on the natural $L$-module. By Lemma \ref{lemma:Dn}, the only case where
 such a Jordan block structure can occur is for the regular element in $D_4$, in which case $X$ is unique (up to $L$-conjugacy) as above. Finally suppose that $L$ is of type $E_6$,
 then $X$ is $L$-irreducible and it follows from Proposition \ref{prop:G-irr} that $X$ does not meet any of the given classes. 

 Finally, take $G=E_8, p=7$, and so $\class(u)\in \{ A_3, D_4, A_5\}$. 
 First suppose that $L$ is of type $A_n$, $3 \leq n \leq 7$, so that $X$ does not meet the $D_4$-class, and $X$ meets the $A_3$ or $A_5$-class if and
 only if $u$ acts with Jordan blocks $(J_4+J_1^r)$ or $(J_6+J_1^r)$ respectively on the natural $L$-module. 
 It follows from Lemma~\ref{lemma:An} that this can only occur for $u$ acting as the regular element in $L$ of type $A_3$ or $A_5$ respectively, in
 which case there is a unique $L$-class of such $X$ which meet the given class, as above. 
 Next suppose that $L$ is of type $D_n$, $4 \leq n \leq 7$. Then $X$ meets the $A_3$, $D_4$ or $A_5$-class if and only if $u$ acts on  on the natural $L$-module  with Jordan
 blocks $(J_4^2+J_1^r)$ or $(J_5+J_1^r)$ (for the $A_3$-class), or $(J_7+J_1^r)$ or $ (J_6^2+J_1^r)$, respectively (for the remaining classes). By Lemma \ref{lemma:Dn}, the only case where such a Jordan
 block structure can occur is for the regular element in $L= D_4$, in which case $X$ is unique (up to $L$-conjugacy) as above.
 Finally suppose that $L$ is of type $E_6$ or $E_7$, then $X$ is $L$-irreducible, and it follows from Proposition \ref{prop:G-irr} that $X$ does not
 meet any of the given classes. 
\end{proof}

We observe that the proof of the previous proposition shows that  if $X \subseteq G$ is a $G$-cr, $G$-reducible $A_1$-subgroup of $G$ meeting $\class(u)$, for $\class(u)$ as in Prop.~\ref{p:Greducible}, then $X$ must be $L$-irreducible in a
Levi subgroup $L$ of $G$, and the non-identity unipotent elements of $X$ are regular in $L$. 

We now combine the results of \S4 to prove Theorem \ref{t:mainexceptional}.

\begin{proof}[Proof of Theorem $\ref{t:mainexceptional}$]
The cases $G=G_2$, and $G=F_4$ are covered by Proposition~\ref{p:largeprimes}, as well as the cases $G=E_6$ for $p>5$, $G=E_7$ for $p>7$, and $G=E_8$ for $p>7$.
As we are assuming that $p$ is a good prime for $G$, we are left to consider the pairs $(G,p)\in\{(E_6, 5),
    (E_7, 5),
    (E_7,7), (E_8,7)\}$.
    In these cases we use Proposition \ref{reduction} to deduce that $u$ satisfies that any two $A_1$-subgroups of $G$ containing $u$ are $G$-conjugate only if $u$ is as
    in Theorem~\ref{t:mainexceptional}, or if $G=E_7$, $p=7$ and $u$ is in the $A_6$-class. 
Proposition~\ref{reduction} then establishes the existence of two non-conjugate non-$G$-cr $A_1$-subgroups meeting the $A_6$-class in $G=E_7$ when $p=7$.

It follows from Proposition \ref{p:longroot} that there is a unique class of $A_1$-subgroups of $G$ containing representatives of the $A_1$-class of unipotent elements. 

We now treat the remaining classes in the statement of Theorem \ref{t:mainexceptional}, i.e. those classes where $(G,p)$ is as above, $u$ has order $p$ and $\class(u)\ne A_1$.  Let $X$ be an $A_1$-subgroup of $G$ containing $u$, then it follows from Propositions \ref{p:nonGcr} and \ref{prop:G-irr} that $X$ is neither
non-$G$-cr nor $G$-irreducible.
Therefore $X$ must be $G$-cr and $G$-reducible, and then by Proposition~\ref{p:Greducible}, it follows that any two such $A_1$-subgroups containing $u$ are $G$-conjugate.
\end{proof}

\bibliographystyle{alpha}
\bibliography{rsch}

\end{document}